\newtheorem{theorem}{Theorem}[section]
\newtheorem*{theorem*}{Theorem}
\newtheorem{lemma}[theorem]{Lemma}
\newtheorem{proposition}[theorem]{Proposition}
\newtheorem*{proposition*}{Proposition}
\newtheorem{corollary}[theorem]{Corollary}
\newtheorem*{corollary*}{Corollary}
\newtheorem{conjecture}[theorem]{Conjecture}
\newtheorem{cit}[theorem]{Citation}
\newtheorem*{conjecture*}{Conjecture}
\newtheorem*{main:rationally_full}{Corollary~\ref{cor:rationally_full}}
\theoremstyle{definition}
\newtheorem{definition}[theorem]{Definition}
\newtheorem*{definition*}{Definition}
\newtheorem{observation}[theorem]{Observation}
\newtheorem*{question*}{Question}
\newcommand{\Z}{\mathbb{Z}}
\newcommand{\N}{\mathbb{N}}
\newcommand{\Q}{\mathbb{Q}}
\newcommand{\R}{\mathbb{R}}
\newcommand{\defeq}{\mathbin{\vcentcolon =}}
\DeclareMathOperator{\Hom}{Hom}
\DeclareMathOperator{\Homeo}{Homeo}
\DeclareMathOperator{\F}{F}
\DeclareMathOperator{\Supp}{Supp}
\newcommand{\optionalarg}[2]{
\ifthenelse{\equal{#2}{}}{%
#1}{%
#1(#2)}
}
\newcommand{\Thomp}[1]%
   {\optionalarg{\mathscr{T}}{#1}}                 
\newcommand{\Thkern}[1]%
   {\optionalarg{\mathscr{K}}{#1}}                 
\newcommand{\Stein}[1]%
   {\optionalarg{\mathscr{X}}{#1}}                 
\newcommand{\Fbr}%
   {F_{\operatorname{br}}}                 
\newcommand{\Vbr}%
   {V_{\operatorname{br}}}                 
\numberwithin{equation}{section}
\begin{document}

\title{The BNSR-invariants of the Stein group $F_{2,3}$}
\date{\today}
\subjclass[2010]{Primary 20F65;   
                 Secondary 57M07} 

\keywords{Thompson group, Stein group, BNSR-invariant, finiteness properties}

\author[R.~Spahn]{Robert Spahn}
\address{Department of Mathematics and Statistics, University at Albany (SUNY), Albany, NY 12222}
\email{rspahn@albany.edu}

\author[M.~C.~B.~Zaremsky]{Matthew C.~B.~Zaremsky}
\address{Department of Mathematics and Statistics, University at Albany (SUNY), Albany, NY 12222}
\email{mzaremsky@albany.edu}

\begin{abstract}
The Stein group $F_{2,3}$ is the group of orientation-preserving homeomorphisms of the unit interval with slopes of the form $2^p3^q$ ($p,q\in\Z$) and breakpoints in $\Z[\frac{1}{6}]$. This is a natural relative of Thompson's group $F$. In this paper we compute the Bieri--Neumann--Strebel--Renz (BNSR) invariants $\Sigma^m(F_{2,3})$ of the Stein group for all $m\in\N$. A consequence of our computation is that (as with $F$) every finitely presented normal subgroup of $F_{2,3}$ is of type $\F_\infty$. Another, more surprising, consequence is that (unlike $F$) the kernel of any map $F_{2,3}\to\Z$ is of type $\F_\infty$, even though there exist maps $F_{2,3}\to \Z^2$ whose kernels are not even finitely generated. In terms of BNSR-invariants, this means that every discrete character lies in $\Sigma^\infty(F_{2,3})$, but there exist (non-discrete) characters that do not even lie in $\Sigma^1(F_{2,3})$. To the best of our knowledge, $F_{2,3}$ is the first group whose BNSR-invariants are known exhibiting these properties.
\end{abstract}

\maketitle
\thispagestyle{empty}

\section*{Introduction}

Thompson's group $F$ is the group of orientation-preserving piecewise linear homeomorphisms of the unit interval $[0,1]$ with breakpoints in $\Z[\frac{1}{2}]$ and slopes powers of $2$. One can generalize this to a group denoted $F_n$ by considering breakpoints in $\Z[\frac{1}{n}]$ and slopes powers of $n$. Generalizing in another direction, we get the main group of study in this paper:

\begin{definition*}
The \emph{Stein group} $F_{2,3}$ is the subgroup of $\Homeo([0,1])$ consisting of all orientation-preserving piecewise linear homeomorphisms with breakpoints in $\Z[\frac{1}{6}]$ and slopes in the multiplicative group $\langle 2,3\rangle_{\Q^\times}=\{2^p3^q\mid p,q\in\Z\}\le \Q^\times$.
\end{definition*}

The group $F_{2,3}$ was first studied in depth by Stein in \cite{stein92}, along with a family of generalizations allowing more breakpoints and slopes (see Subsection~\ref{sec:questions} for more on this). In certain ways $F_{2,3}$ is more unusual than Thompson's group $F$. For example, combining the construction of $F_{2,3}$ with a result of Bieri--Neumann--Strebel in \cite{bieri87}, it is clear that every kernel of a map from $F_{2,3}$ to $\Z$ is finitely generated (unlike for $F$), but there exist maps from $F_{2,3}$ to $\Z^2$ whose kernels are not finitely generated. Another unexpected fact is that $F_{2,3}$ does not embed as a subgroup into $F$ \cite{lodha20}. Other notable work on the Stein group includes Wladis's work on distortion \cite{wladis11} and metric properties \cite{wladis12}.

An important result Stein proved in \cite{stein92} is that $F_{2,3}$ is of type $\F_\infty$. Recall that a group is of \emph{type $F_n$} if it admits a classifying space with compact $n$-skeleton, and of \emph{type $F_\infty$} if it is of type $\F_n$ for all $n$. Every group is of type $\F_0$, type $\F_1$ is equivalent to finite generation, and type $\F_2$ is equivalent to finite presentability. The fact that Thompson's group $F$ is of type $\F_\infty$, proved by Brown and Geoghegan in \cite{brown84}, made it the first torsion-free group of type $\F_\infty$ that does not admit a compact classifying space, and $F_{2,3}$ enjoys this distinction as well.

For a group $G$ of type $\F_\infty$, one can define the Bieri--Neumann--Strebel--Renz (BNSR) invariants $\Sigma^m(G)$ for $m\in\N$. These are a family of geometric invariants, developed by Bieri--Neumann--Strebel \cite{bieri87} and Bieri--Renz \cite{bieri88}, encoding information about $G$. In particular they reveal the finiteness properties of any subgroup $H\le G$ containing the commutator subgroup $[G,G]$. The BNSR-invariants of $F$ were computed by Bieri--Geoghegan--Kochloukova in \cite{bieri10}, and more generally for the generalized Thompson groups $F_n$ by Kochloukova and the second author in \cite{kochloukova12} and \cite{zaremsky17F_n}.

The main result of this paper is a full computation of $\Sigma^m(F_{2,3})$ for all $m$. The statement of the computation is quite technical, so rather than write it all out here in the introduction we just refer the reader to Theorem~\ref{thrm:main}. A detail-free version of Theorem~\ref{thrm:main} is:

\begin{theorem*}
We have that $\Sigma^1(F_{2,3})$ is a $3$-sphere with two points removed, $\Sigma^2(F_{2,3})$ is $\Sigma^1(F_{2,3})$ with the convex hull of these two points additionally removed, and $\Sigma^m(F_{2,3})=\Sigma^2(F_{2,3})$ for all $m\ge 2$.
\end{theorem*}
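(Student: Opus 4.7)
My plan is to apply the Morse-theoretic strategy that has become standard for computing BNSR invariants of Thompson-like groups. The geometric model is Stein's cubical complex $\Stein$, on which $F_{2,3}$ acts cellularly with cocompact skeleta and contractible underlying space (this is the complex Stein used to prove $\F_\infty$-ness). For each character $\chi\colon F_{2,3}\to\R$ one constructs an equivariant height function $h_\chi\colon\Stein\to\R$ realizing $\chi$ on the $0$-skeleton. Brown's criterion together with the Bestvina--Brady Morse lemma translate the question of $[\chi]\in\Sigma^m(F_{2,3})$ into an $(m-1)$-connectivity statement for the descending links of vertices of $\Stein$ with respect to $h_\chi$, on a cofinal subfamily of the filtration.

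The vertices of $\Stein$ correspond to equivalence classes of tree-pair diagrams whose carets have valence $2$ or $3$, with elementary expansions providing the higher cells. Writing a general character as $\chi=a_0^{(2)}\chi_0^{(2)}+a_0^{(3)}\chi_0^{(3)}+a_1^{(2)}\chi_1^{(2)}+a_1^{(3)}\chi_1^{(3)}$ in the four standard endpoint-slope characters, I would identify the two exceptional points as $[\chi_0]=[(\log 2)\chi_0^{(2)}+(\log 3)\chi_0^{(3)}]$ and $[\chi_1]=[(\log 2)\chi_1^{(2)}+(\log 3)\chi_1^{(3)}]$, i.e., the ``total logarithmic slope'' characters at $0$ and $1$. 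Off the closed great arc between these two points, at least one endpoint contributes a coefficient pair whose sign pattern permits arbitrarily many descending elementary expansions, so one shows the descending links become highly connected as the tree-pair grows, yielding $\Sigma^\infty$ membership. At the two exceptional points themselves, the descending link at the offending end collapses so severely that no amount of further expansion restores connectivity; this certifies non-membership in $\Sigma^1$ and matches the already-known failure of finite generation of $\ker\chi_0$ and $\ker\chi_1$.

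The subtle case, and what I expect to be the main obstacle, is the open arc joining $[\chi_0]$ and $[\chi_1]$. For these mixed characters the descending link is delicately balanced: one must show that its connectivity stabilizes at exactly $0$ on a cofinal subfamily, connected but not simply-connected. The upper bound (connectivity $\le 0$) proves $[\chi]\notin\Sigma^2(F_{2,3})$, while the lower bound (connectivity $\ge 0$) proves both $[\chi]\in\Sigma^1(F_{2,3})$ and, crucially, the stabilization $\Sigma^m(F_{2,3})=\Sigma^2(F_{2,3})$ for all $m\ge 2$, so that no further characters drop out at higher stages. Finally, the non-membership statements must be complemented by direct group-theoretic obstructions, constructed in the spirit of \cite{kochloukova12,zaremsky17F_n} for the generalized Thompson groups $F_n$, to exhibit explicit preimages witnessing failure of finite generation or finite presentability for the relevant subgroups.
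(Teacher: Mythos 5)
There is a genuine gap here, and it sits precisely where your plan defers the work. First, the entire positive direction is asserted rather than argued: for a group with four independent endpoint characters, identifying the descending links in the Stein cube complex with respect to an arbitrary $\chi=a_0^{(2)}\chi_0^2+a_0^{(3)}\chi_0^3+a_1^{(2)}\chi_1^2+a_1^{(3)}\chi_1^3$ and proving the needed (essential) connectivity on a cofinal subfamily is the whole difficulty, not a routine adaptation of the $F_n$ case in \cite{zaremsky17F_n}; with mixed $2$- and $3$-ary carets the links are new objects, and you give no indication of what they are or why their connectivity grows exactly off the closed arc from $[\lambda]$ to $[\rho]$. Second, and more seriously, the negative directions are logically misconfigured: Morse theory on descending links yields only sufficient conditions for membership in $\Sigma^m$. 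Showing that descending links have connectivity $\le 0$ (are not simply connected) does \emph{not} prove $[\chi]\notin\Sigma^2(F_{2,3})$, and a ``severely collapsed'' descending link does not certify $[\chi]\notin\Sigma^1(F_{2,3})$; non-membership requires proving the filtration is not essentially connected (resp.\ $1$-connected), which needs a separate argument. You acknowledge this by invoking unspecified ``direct group-theoretic obstructions,'' but those obstructions are exactly the content that is missing.

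For comparison, the paper does none of this Morse theory. The $\Sigma^1$ computation, including both exceptional points, is quoted from Bieri--Neumann--Strebel \cite[Theorem~8.1]{bieri87} (Citation~\ref{cit:Sigma1}); the non-membership $[a\lambda+b\rho]\notin\Sigma^2(F_{2,3})$ for $a,b>0$ follows from Kochloukova's Theorem~A1 \cite{kochloukova02}, which applies because $F_{2,3}$ has no non-abelian free subgroups by Brin--Squier \cite{brin85}. The positive direction is handled algebraically: Lemma~\ref{lem:F23_hnn} shows the kernel of any discrete one-sided character is an ascending HNN-extension with base $F_{2,3}[1/2,1]\cong F_{2,3}$ (Lemma~\ref{lem:conjugator}), and then the HNN and normal-subgroup criteria of Bieri--Geoghegan--Kochloukova and Meinert (Citations~\ref{cit:hnn_dead_base}, \ref{cit:hnn_live_base}, \ref{cit:ker_to_Zn}) bootstrap $[\chi]\in\Sigma^\infty(F_{2,3})$ for every class off the closed arc (Proposition~\ref{prop:left_any} and Corollary~\ref{cor:all_in_Sigmainfty}). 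If you want to salvage your approach you would need to supply the full descending-link analysis and, separately, genuine obstruction arguments for the negative statements; alternatively, note that the paper's route shows these can be replaced entirely by the HNN decomposition plus existing citations.
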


One easy-to-state consequence of the computation is the following, which is contained in Corollary~\ref{cor:main}.

\begin{corollary*}
Let $H$ be a subgroup of $F_{2,3}$ containing the commutator subgroup $[F_{2,3},F_{2,3}]$ (which is equivalent to saying $H$ is a non-trivial normal subgroup). If $H$ is finitely presented then it is of type $\F_\infty$.
\end{corollary*}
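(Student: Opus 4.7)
The plan is to derive the corollary directly from the standard Bieri--Renz criterion together with the main theorem. The Bieri--Renz criterion says that if $G$ is of type $\F_\infty$ and $N \trianglelefteq G$ has abelian quotient $G/N$, then for any $m \in \N$, $N$ is of type $\F_m$ if and only if the subsphere
\[
S(G,N) \defeq \{[\chi] \in S(G) : \chi(N) = 0\}
\]
is contained in $\Sigma^m(G)$. The first step of my proof is to invoke this, taking $G = F_{2,3}$ and $N = H$: since $H \supseteq [F_{2,3}, F_{2,3}]$, the quotient $F_{2,3}/H$ is abelian, so the criterion applies.

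The second step is simply to observe that the main theorem gives $\Sigma^m(F_{2,3}) = \Sigma^2(F_{2,3})$ for all $m \geq 2$. Hence the chain of inclusions
\[
S(F_{2,3},H) \subseteq \Sigma^m(F_{2,3}) \quad (m \in \N)
\]
collapses: the inclusion for $m=2$ already implies the inclusion for all larger $m$. Under the hypothesis that $H$ is finitely presented, i.e. of type $\F_2$, the criterion gives $S(F_{2,3}, H) \subseteq \Sigma^2(F_{2,3})$, and therefore $S(F_{2,3},H) \subseteq \Sigma^m(F_{2,3})$ for every $m$. Applying the criterion in the reverse direction then yields that $H$ is of type $\F_m$ for every $m$, i.e.\ of type $\F_\infty$.

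For the parenthetical equivalence (that containing $[F_{2,3},F_{2,3}]$ is the same as being a non-trivial normal subgroup), I would cite the known fact, analogous to Thompson's group $F$, that the commutator subgroup $[F_{2,3},F_{2,3}]$ is simple and is the unique minimal non-trivial normal subgroup of $F_{2,3}$; this is standard for Stein--Thompson groups and can be extracted from Stein's original analysis in \cite{stein92}. There is essentially no obstacle here: this corollary is a direct consequence of the fact that $\Sigma^m(F_{2,3})$ stabilizes at $m=2$, and the work in the paper is not in this deduction but in the computation of the BNSR-invariants themselves.
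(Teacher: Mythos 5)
Your proposal is correct and follows essentially the same route as the paper: apply the Bieri--Renz criterion (Citation~\ref{cit:bnsr_fin_props}) to the computation in Theorem~\ref{thrm:main}, using that $\Sigma^2(F_{2,3})=\Sigma^\infty(F_{2,3})$, so type $\F_2$ forces all of $S(F_{2,3},H)$ into $\Sigma^\infty(F_{2,3})$ and hence $H$ is of type $\F_\infty$. The only cosmetic difference is that the parenthetical equivalence, which you attribute to Stein, is actually proved in the paper itself (Lemma~\ref{lem:normal_contain_comm}, via triviality of the center and simplicity of $[F_{2,3},F_{2,3}]$), but this does not affect correctness.
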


This is ``expected behavior'' for relatives of $F$. We also get the following unusual result, which makes $F_{2,3}$ very different than $F$:

\begin{main:rationally_full}
The kernel of any character $\chi\colon F_{2,3} \to \Z$ is of type $\F_\infty$. However, there exist characters $\chi\colon F_{2,3} \to \Z^2$ whose kernels are not even finitely generated.
\end{main:rationally_full}

This follows from the precise statement of Theorem~\ref{thrm:main}, which reveals that all discrete character classes lie in $\Sigma^\infty(F_{2,3})$ but there exist (non-discrete) character classes that do not even lie in $\Sigma^1(F_{2,3})$. Another notable consequence is that $F_{2,3}$ provides an example of a type $\F_\infty$ group whose $\Sigma^\infty$ is determined by a polytope, but not an integral polytope; see \cite{kielak20} for a discussion of BNSR-invariants and integral polytopes. To the best of our knowledge $F_{2,3}$ is the first group whose BNSR-invariants are known exhibiting any of this behavior.

This paper is organized as follows. In Section~\ref{sec:background} we establish some background on BNSR-invariants and $F_{2,3}$. In Section~\ref{sec:chars} we set up a certain character basis of $\Hom(F_{2,3},\R)$ and recall the computation of $\Sigma^1(F_{2,3})$ from \cite{bieri87}. The main section is Section~\ref{sec:computation}, where we compute $\Sigma^m(F_{2,3})$ for all $m$. Finally, we discuss some applications in Section~\ref{sec:apps}, along with a discussion of how much more difficult it might be to compute the BNSR-invariants of the other generalizations from \cite{stein92}.

\subsection*{Acknowledgments} We are grateful to Dawid Kielak for encouraging us to undertake this project. The second author is supported by grant \#635763 from the Simons Foundation.

\section{Background}\label{sec:background}

In this section we recall some background, first on BNSR-invariants, and second on the action of $F_{2,3}$ on $[0,1]$.

\subsection{BNSR-invariants}\label{sec:bnsr}

The Bieri--Neumann--Strebel--Renz invariants $\Sigma^m(G)$ of a group are a family of geometric invariants, defined whenever $G$ is of type $\F_m$, that encode information about the subgroups of $G$ containing the commutator subgroup $[G,G]$. Each $\Sigma^m(G)$ is a subset of the \emph{character sphere} $\Sigma(G)$ of $G$, that is the projectivization of the euclidean space $\Hom(G,\R)$. Thus an element of $\Sigma(G)$ is an equivalence class $[\chi]$ for $\chi$ a non-trivial \emph{character} of $G$, i.e., an element of $\Hom(G,\R)$, with the equivalence relation given by positive scaling. The invariants are nested:
\[
\Sigma(G)=\Sigma^0(G)\supseteq \Sigma^1(G)\supseteq \Sigma^2(G)\supseteq\cdots \text{.}
\]
If $G$ is of type $\F_\infty$ then $\Sigma^m(G)$ is defined for all $m$, and we can define $\Sigma^\infty(G)\defeq \bigcap_{m\in\N}\Sigma^m(G)$. (As a quick aside: One important fact about the $\Sigma^m(G)$ is that they are open \cite{bieri87,bieri88}, but we remark here that, to the best of our knowledge, it is unknown whether $\Sigma^\infty(G)$ must be open for all $G$.)

The formal definition of $\Sigma^m(G)$ is quite complicated, and in fact we will not need to use it in this paper, but for completeness we recall it here.

\begin{definition}[BNSR-invariants]\label{def:bnsr}
Let $G$ be a group of type $\F_m$. Let $Y$ be an $(m-1)$-connected CW-complex on which $G$ acts freely and cocompactly by cell-permuting homeomorphisms (this exists since $G$ is of type $\F_m$). Let $\chi\colon G\to\R$ be a non-trivial character of $G$, and let $h_\chi\colon Y\to\R$ be a \emph{$\chi$-equivariant} map, that is, satisfying $h_\chi(g.y)=\chi(g)+h_\chi(y)$ for all $g\in G$ and $y\in Y$. For $t\in\R$ let $Y_{\chi\ge t}$ be the full subcomplex of $Y$ spanned by vertices $y$ with $h_\chi(y)\ge t$. Now the $m$th \emph{Bieri--Neumann--Strebel--Renz (BNSR) invariant} $\Sigma^m(G)$ is the subset of $\Sigma(G)$ consisting of all $[\chi]$ such that the filtration $(Y_{\chi\ge t})_{t\in\R}$ is \emph{essentially $(m-1)$-connected}, i.e., for all $t$ there exists $s\le t$ such that the inclusion $Y_{\chi\ge t} \to Y_{\chi\ge s}$ induces the trivial map in homotopy groups $\pi_k$ for all $k\le m-1$.
\end{definition}

The main application of the BNSR-invariants (which is much easier to understand than the definition) is the following:

\begin{cit}\cite[Theorem~1.1]{bieri10}\label{cit:bnsr_fin_props}
Let $G$ be a group of type $\F_n$ and $H$ a subgroup of $G$ containing the commutator subgroup $[G,G]$. Then $H$ is of type $\F_n$ if and only if $[\chi]\in\Sigma^n(G)$ for all $0\ne\chi\in\Hom(G,\R)$ satisfying $\chi(H)=0$.
\end{cit}

In particular if $H$ is the kernel of a map $\chi$ onto a copy of $\Z$, i.e., a \emph{discrete character}, then $H$ is of type $\F_n$ if and only if $[\pm\chi]\in\Sigma^m(G)$. Note that every discrete character is equivalent under positive scaling to one whose image in $\R$ equals $\Z$.

\subsection{Action of the Stein group}\label{sec:stein}

The results in this subsection are analogs for $F_{2,3}$ of well known facts about Thompson's group $F$. Recall that the \emph{support} of a homeomorphism $f\colon [0,1]\to[0,1]$ is $\Supp(f)\defeq\{a\in [0,1]\mid f(a)\ne a\}$.

\begin{lemma}\label{lem:normal_contain_comm}
Every non-trivial normal subgroup of $F_{2,3}$ contains the commutator subgroup $[F_{2,3},F_{2,3}]$.
\end{lemma}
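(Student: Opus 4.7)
The plan is to mimic the classical proof that Thompson's group $F$ has simple commutator subgroup, adapting the argument to the slope and breakpoint constraints of $F_{2,3}$. First I would identify $[F_{2,3},F_{2,3}]$ explicitly. Since $\Hom(F_{2,3},\R)\cong \R^4$, spanned by the four characters $f\mapsto \log_2 f'(0^+)$, $f\mapsto \log_3 f'(0^+)$, $f\mapsto \log_2 f'(1^-)$, $f\mapsto \log_3 f'(1^-)$, and since these extend to a surjection $F_{2,3}\twoheadrightarrow \Z^4$ with abelian image, the commutator subgroup is exactly the common kernel: those $f$ with slope $1$ at both endpoints. For a PL element this is equivalent to $f$ being the identity on a neighborhood of each endpoint, so $[F_{2,3},F_{2,3}]$ is precisely the subgroup of elements supported in some compact subinterval of $(0,1)$.

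Next, given a non-trivial normal subgroup $N\triangleleft F_{2,3}$ and $1\ne f\in N$, I would produce elements of $N$ with arbitrarily small compact support in $(0,1)$ via the standard commutator trick. Pick $a\in (0,1)$ with $f(a)\ne a$ and shrink to a subinterval $I$, with endpoints in $\Z[\tfrac{1}{6}]$ and on which $f$ is affine with slope in $\langle 2,3\rangle_{\Q^\times}$, such that $f(I)\cap I=\emptyset$. For any $g\in F_{2,3}$ supported in $I$, the commutator $[f,g]=(fgf^{-1})g^{-1}$ lies in $N$ and splits as two disjointly-supported pieces, one in $I$ and one in $f(I)$. Choosing $g$ carefully (and if necessary multiplying by another conjugate to cancel the $f(I)$-piece) yields a non-trivial element of $N\cap [F_{2,3},F_{2,3}]$ with support in a small subinterval of $(0,1)$.

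Finally, I would show that the normal closure in $F_{2,3}$ of any such small-support element covers all of $[F_{2,3},F_{2,3}]$. The key tool is the transitivity of $F_{2,3}$ on standard subintervals: given two subintervals of $(0,1)$ with endpoints in $\Z[\tfrac{1}{6}]$ whose length ratio lies in $\langle 2,3\rangle_{\Q^\times}$, some element of $F_{2,3}$ carries one onto the other. Combining this with a fragmentation argument, writing every element of $[F_{2,3},F_{2,3}]$ as a product of elements each supported in one of a chain of small standard subintervals, then finishes the proof. The main obstacle is this last step: because slopes are restricted to $\langle 2,3\rangle_{\Q^\times}$ rather than all of $\Q^\times_{>0}$, one must verify carefully that $F_{2,3}$ has enough interval-to-interval maps to conjugate the small-support generator throughout $(0,1)$, and that the resulting conjugates really do normally generate the entire commutator subgroup.
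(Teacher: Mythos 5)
Your overall strategy---identify $[F_{2,3},F_{2,3}]$ with the elements supported in a compact subinterval of $(0,1)$, use the commutator trick to push a small-support element into $N$, then spread it around by transitivity and fragmentation---is the standard Higman--Epstein style route and is viable in principle, but as written the decisive last step is a genuine gap, and you flag it yourself. Two concrete problems. First, the transitivity and fragmentation facts are exactly where the restriction of slopes to $\langle 2,3\rangle_{\Q^\times}$ and breaks to $\Z[\frac{1}{6}]$ bites, and you only assert them; establishing them is essentially the Bieri--Strebel style analysis underlying Stein's simplicity theorem, i.e.\ the hard content of the lemma. Second, and more structurally, even granting fragmentation the argument does not close: the commutator trick (in its clean double-commutator form, $[[f,g],h]=[g^{-1},h]$ for $g,h$ supported in an interval $I$ with $f(I)\cap I=\emptyset$) shows that $N$ contains all commutators of pairs of elements supported in $I$, hence contains $[F_{2,3}[J],F_{2,3}[J]]$ for every interval $J$ in the $F_{2,3}$-orbit of $I$ (where $F_{2,3}[J]$ denotes the elements supported in $J$); but fragmentation writes an arbitrary element of $[F_{2,3},F_{2,3}]$ as a product of elements of the full interval groups $F_{2,3}[J]$. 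You still owe an argument that each such piece lies in $N$---for instance that any element supported in $J$ is a product of commutators of elements supported in a somewhat larger standard interval---and that is precisely the perfectness/simplicity machinery you are implicitly reproving. (Your first step is fine, but note it quietly uses Stein's computation that the abelianization is $\Z^4$, together with Hopficity of $\Z^4$, to conclude the slope-at-the-endpoints kernel equals the commutator subgroup.)

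For comparison, the paper's proof is much shorter and sidesteps all of this: it first checks that $F_{2,3}$ has trivial center (using elements of $F\le F_{2,3}$ and transitivity of $F$ on dyadic points), so for $1\ne f\in N$ there is $g$ with $1\ne[f,g]\in N\cap[F_{2,3},F_{2,3}]$, and then it invokes Stein's theorem that $[F_{2,3},F_{2,3}]$ is simple: the non-trivial subgroup $N\cap[F_{2,3},F_{2,3}]$ is normal in $[F_{2,3},F_{2,3}]$, hence equals it. If you want a complete proof of reasonable length, replace your third step by this citation; if you want your self-contained route, the transitivity, fragmentation, and ``pieces are products of commutators'' steps must all be carried out in detail under the $\langle 2,3\rangle_{\Q^\times}$, $\Z[\frac{1}{6}]$ constraints.
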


\begin{proof}
The proof is very similar to that of the analogous result for $F$ in \cite[Theorem~4.3]{cannon96}, and our proof here will make use of some well known facts about $F\le F_{2,3}$. First we claim the center of $F_{2,3}$ is trivial. The key is that if two elements commute then they stabilize each other's sets of fixed points in $[0,1]$. Since there exists an element of $F$ with support $(1/2,1)$, any central element $z\in F_{2,3}$ must fix $1/2$. Then since for any dyadic $a\in(0,1)\cap\Z[1/2]$ there exists an element of $F$ taking $a$ to $1/2$, every such $a$ must be fixed by $z$. This implies $z=1$, so the center of $F_{2,3}$ is trivial. Now we need to show that the normal closure $N$ of any non-trivial element $f$ of $F_{2,3}$ contains $[F_{2,3},F_{2,3}]$. Since the center is trivial, we can choose some $g\in F_{2,3}$ such that $[f,g]\ne 1$. Since $f\in N$ implies $[f,g]\in N$, we know $N$ contains a non-trivial element of $[F_{2,3},F_{2,3}]$. Since $[F_{2,3},F_{2,3}]$ is simple by \cite{stein92}, $N$ must contain all of $[F_{2,3},F_{2,3}]$.
\end{proof}

For an interval $[a,b]$ in $\R$, let $F_{2,3}[a,b]$ be the group of orientation-preserving piecewise linear homeomorphisms of $[a,b]$ with breakpoints in $\Z[\frac{1}{6}]$ and slopes in $\langle 2,3\rangle_{\Q^\times}$, so for example $F_{2,3}=F_{2,3}[0,1]$.

\begin{lemma}\label{lem:conjugator}
The group $F_{2,3}[0,1]$ is conjugate in $F_{2,3}[-1,1]$ to $F_{2,3}[1/2,1]$. In particular $F_{2,3}[1/2,1]\cong F_{2,3}$.
\end{lemma}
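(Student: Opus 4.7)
The plan is to exhibit an explicit element $h \in F_{2,3}[-1,1]$ with $h(0) = 1/2$ and $h(1) = 1$, so that $h^{-1} F_{2,3}[1/2,1] h = F_{2,3}[0,1]$ once both are regarded as subgroups of $F_{2,3}[-1,1]$ via extension by the identity. Since conjugation preserves the support, the only nontrivial constraint in choosing $h$ is that it must be an element of $F_{2,3}[-1,1]$, i.e.\ piecewise linear with breakpoints in $\Z[\frac{1}{6}]$ and slopes in $\langle 2,3\rangle_{\Q^\times}$, while carrying the interval $[-1,0]$ onto $[-1,1/2]$ and $[0,1]$ onto $[1/2,1]$.

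For a concrete construction, I would take $h$ to be the identity on $[-1,-1/2]$, linear with slope $2$ on $[-1/2,0]$ (so that $h(0) = -1/2 + 2 \cdot 1/2 = 1/2$), and linear with slope $1/2$ on $[0,1]$ (so that $h(1) = 1/2 + 1/2 = 1$). One checks that the breakpoints $-1/2,0$ lie in $\Z[\frac{1}{6}]$ and the slopes $1,2,1/2$ lie in $\langle 2,3\rangle_{\Q^\times}$, so $h$ is a bona fide element of $F_{2,3}[-1,1]$. The inverse $h^{-1}$ is constructed analogously and is likewise an element of $F_{2,3}[-1,1]$.

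Next I would verify the conjugation claim. Given $f \in F_{2,3}[0,1]$, extended by the identity on $[-1,0]$, the conjugate $hfh^{-1}$ is piecewise linear, fixes $h([-1,0]) = [-1,1/2]$ pointwise, and maps $[1/2,1]$ to itself. Its slopes are products of slopes of $h$, $f$, $h^{-1}$, all lying in the multiplicative group $\langle 2,3\rangle_{\Q^\times}$, and its breakpoints lie in $\Z[\frac{1}{6}]$ since elements of $F_{2,3}[-1,1]$ map $\Z[\frac{1}{6}]$ to itself. Hence $hfh^{-1} \in F_{2,3}[1/2,1]$, and the symmetric argument using $h^{-1}$ gives the reverse containment. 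The ``In particular'' clause then follows because conjugation is a group isomorphism.

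I do not foresee any real obstacle: the proof reduces to writing down a valid piecewise linear conjugator, and the main thing to watch is that every slope introduced lies in $\langle 2,3\rangle_{\Q^\times}$ and every breakpoint in $\Z[\frac{1}{6}]$. The slope $1/2 = 2^{-1}$ on the right piece is the key observation that makes such an $h$ exist within $F_{2,3}$; any attempt using a single linear piece on $[0,1]$ would work equally well, and the only reason for the extra breakpoint on the left is to accommodate the slope condition while matching the endpoint values.
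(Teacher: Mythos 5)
Your proof is correct and follows essentially the same route as the paper: both exhibit an explicit piecewise linear conjugator in $F_{2,3}[-1,1]$ carrying $[0,1]$ onto $[1/2,1]$ and observe that conjugation by it identifies $F_{2,3}[0,1]$ with $F_{2,3}[1/2,1]$. The only difference is cosmetic --- the paper's conjugator uses slope $\frac{3}{2}$ on $[-1,0]$ (so no extra breakpoint is needed there), while yours inserts a breakpoint at $-1/2$ and uses slopes $1$ and $2$; both are valid elements of $F_{2,3}[-1,1]$.
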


\begin{proof}
Let $f$ be any element of $F_{2,3}[-1,1]$ that maps $[0,1]$ bijectively to $[1/2,1]$, for example
\[
f(x)\defeq
\begin{cases}
\frac{3}{2}x+\frac{1}{2} & \text{ if } -1\le x\le 0 \\
\frac{1}{2}x+\frac{1}{2} & \text{ if } 0\le x\le 1 \text{.}
\end{cases}
\]
Since $f$ maps $[0,1]$ bijectively to $[1/2,1]$, we have $fF_{2,3}[0,1]f^{-1}=F_{2,3}[1/2,1]$.
\end{proof}

\section{Characters of the Stein group}\label{sec:chars}

The first step in computing the BNSR-invariants of any group is to figure out its abelianization and space of characters. The abelianization was computed by Stein:

\begin{cit}\cite[Theorem~4.7]{stein92}\label{cit:rank2_ablnz}
The abelianization of $F_{2,3}$ is $\Z^4$, so $\Hom(F_{2,3},\R)\cong \R^4$ and $\Sigma(F_{2,3})\cong S^3$.
\end{cit}

To get an understanding of the structure of $\Hom(F_{2,3},\R)$, let us define some characters of $F_{2,3}$.

\begin{definition}[The characters $\lambda$ and $\rho$]
For $f\in F_{2,3}$ let $\lambda(f)$ be the natural logarithm of the (right) derivative of $f$ at $0$ and let $\rho(f)$ be the natural logarithm of the (left) derivative of $f$ at $1$. Since $0$ and $1$ are fixed by all elements of $F_{2,3}$, the chain rule says that $\lambda,\rho\in\Hom(F_{2,3},\R)$.
\end{definition}

\begin{definition}[The characters $\chi_0^2,\chi_0^3,\chi_1^2,\chi_1^3$]
Since $\{2,3\}$ is a basis for the free abelian group $\langle 2,3\rangle_{\Q^\times}$, we know that $e^{\lambda(f)}$ is of the form $2^p3^q$. Let $\chi_0^2(f)\defeq p$ and $\chi_0^3(f)\defeq q$ for these $p$ and $q$, so $\lambda=\ln(2)\chi_0^2+\ln(3)\chi_0^3$. Thus $\chi_0^2$ and $\chi_0^3$ are homomorphisms
\[
\chi_0^2,\chi_0^3 \colon F_{2,3} \to \Z \text{.}
\]
Similarly define
\[
\chi_1^2,\chi_1^3 \colon F_{2,3} \to \Z
\]
using $\rho$ instead of $\lambda$, so $\rho=\ln(2)\chi_1^2+\ln(3)\chi_1^3$.
\end{definition}

Now we want to show that $\chi_0^2,\chi_0^3,\chi_1^2,\chi_1^3$ form a basis of $\Hom(F_{2,3},\R)$, for which we just need to show they are linearly independent. The following will be useful for this, and in fact gives much more than we need at the moment, but will be useful later in this degree of generality.

\begin{lemma}\label{lem:special_elements}
For any $p,q\in\Z$ there exists $f\in F_{2,3}$ such that $\chi_0^2(f)=p$, $\chi_0^3(f)=q$, and $\rho(f)=0$. For any $0<r<1$ the element $f$ can be chosen to have support $\Supp(f)$ satisfying $(0,r)\subseteq \Supp(f)$. A similar result holds for the conditions $\chi_1^2(f)=p$, $\chi_1^3(f)=q$, and $\lambda(f)=0$.
\end{lemma}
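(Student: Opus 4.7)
I would prove this by an explicit three–piece construction. Given $p,q \in \Z$, set $\alpha \defeq 2^p 3^q$. The case $(p,q)=(0,0)$ is trivial (take $f=\id$, in which case the support condition is vacuous), so assume henceforth $\alpha \ne 1$. The idea is to define $f$ to scale by $\alpha$ on a small initial interval $[0,a]$, to be the identity on a terminal interval $[c,1]$, and to interpolate by an affine map of slope $\alpha^{-1}$ on the middle interval $[a,c]$.

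The key algebraic observation is that continuity at the right end of the middle piece forces
\[
\alpha^{-1}(c-a) + \alpha a = c,
\]
which rearranges to $c = (\alpha+1)a$. With this identity, the construction reduces to choosing a single parameter $a$. I would pick $a \in \Z[1/6]$ in the open interval $\bigl(r/(\alpha+1),\, 1/(\alpha+1)\bigr)$, which is nonempty and therefore meets $\Z[1/6]$ by density. The lower bound forces $c > r$, while the upper bound keeps $c < 1$. The breakpoints $a$ and $c = \alpha a + a$ both lie in $\Z[1/6]$ because $\Z[1/6]$ is closed under multiplication by elements of $\langle 2,3\rangle_{\Q^\times}$, and the three slopes $\alpha$, $\alpha^{-1}$, $1$ all lie in $\langle 2,3\rangle_{\Q^\times}$. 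Hence $f \in F_{2,3}$.

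The character values drop out from the endpoint slopes: at $0$ the slope is $\alpha = 2^p 3^q$, giving $\chi_0^2(f) = p$ and $\chi_0^3(f) = q$; at $1$ the slope is $1$, giving $\rho(f) = 0$. For the support claim, each affine piece with slope $\neq 1$ has a unique fixed point, so on $[0,c]$ the only fixed points of $f$ are $0$ and $c$; combined with $f = \id$ on $[c,1]$ this gives $\Supp(f) = (0,c) \supseteq (0,r)$. The mirrored statement for $\chi_1^2, \chi_1^3, \lambda$ is obtained by running the same construction from the right endpoint (conjugating the entire picture by the flip $x \mapsto 1-x$), so no new ideas are needed.

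I do not expect any serious obstacle. The only point requiring attention is ensuring that $a$ and $c = (\alpha+1)a$ both lie in $\Z[1/6]$ for arbitrary integer exponents $p,q$ (including negative ones); this is handled cleanly by the $\langle 2,3\rangle_{\Q^\times}$-stability of $\Z[1/6]$, rather than by ad hoc choices such as $a = 6^{-N}$ for very negative $N$.
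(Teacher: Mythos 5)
Your construction is correct and is genuinely different from the paper's argument. The paper does not build an explicit element: it reduces to the two generating cases $(p,q)=(1,0)$ and $(0,1)$, realized by elements of Thompson's group $F$ (initial slope $2$) and of $F_3$ (initial slope $3$) with final slope $1$ and large support, and then composes these, using that $\chi_0^2,\chi_0^3$ are homomorphisms. Your single three-piece map (slope $\alpha=2^p3^q$ on $[0,a]$, slope $\alpha^{-1}$ on $[a,c]$ with $c=(\alpha+1)a$, identity on $[c,1]$) is self-contained, and it actually handles the support condition more cleanly: you get $\Supp(f)=(0,c)$ exactly with $r<c<1$, whereas the composition route requires a word of justification that a product of elements, each with support containing $(0,r)$, still moves every point of $(0,r)$ (easily arranged, e.g.\ by taking all factors to translate points of their common support in the same direction, but glossed over in the paper). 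The choice of $a\in\Z[\frac{1}{6}]\cap\bigl(r/(\alpha+1),\,1/(\alpha+1)\bigr)$ and the closure of $\Z[\frac{1}{6}]$ under multiplication by $2^{\pm1},3^{\pm1}$ make membership in $F_{2,3}$ immediate, and the fixed-point computation showing the middle piece fixes only $c$ is right. One small correction: in the case $p=q=0$ the support condition is not ``vacuous'' for $f=\id$ (the inclusion $(0,r)\subseteq\emptyset$ fails); in fact no element with $\chi_0^2(f)=\chi_0^3(f)=0$ can have support containing $(0,r)$, since initial slope $1$ forces $f$ to be the identity on a neighborhood of $0$. This degenerate case is an imprecision in the statement itself (the paper's own proof does not address it either), and it never arises in the applications, where the exponents come from a coprime pair and hence are not both zero, so it does not affect the substance of your argument.
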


\begin{proof}
First consider $p=1$ and $q=0$. For $f$ we can take any element of Thompson's group $F$ with initial slope $2$, final slope $1$, and support satisfying $(0,r)\subseteq \Supp(f)$ for whatever $0<r<1$ we want. Similarly if $p=0$ and $q=1$ then for $f$ we can take any element of the generalized Thompson group $F_3$ with initial slope $3$, final slope $1$, and support satisfying $(0,r)\subseteq \Supp(f)$. Now since $\chi_0^2$ and $\chi_0^3$ are homomorphisms we can compose elements from these special cases to build an $f$ for any $p$ and $q$. The last sentence of the claim follows by an analogous argument.
\end{proof}

\begin{corollary}\label{cor:LI}
The characters $\chi_0^2,\chi_0^3,\chi_1^2,\chi_1^3$ form a basis of $\Hom(F_{2,3},\R)$.
\end{corollary}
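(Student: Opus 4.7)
The plan is to combine Citation~\ref{cit:rank2_ablnz}, which tells us $\Hom(F_{2,3},\R)\cong\R^4$, with a linear independence argument using the elements provided by Lemma~\ref{lem:special_elements}. Since we already know the space has dimension $4$ and we have exactly $4$ candidate characters, it suffices to exhibit four elements of $F_{2,3}$ on which the evaluation matrix of $\chi_0^2,\chi_0^3,\chi_1^2,\chi_1^3$ has nonzero determinant.

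First, I would apply Lemma~\ref{lem:special_elements} with $(p,q)=(1,0)$ and with $(p,q)=(0,1)$ to obtain elements $f_1,f_2\in F_{2,3}$ satisfying $\chi_0^2(f_1)=1$, $\chi_0^3(f_1)=0$, $\chi_0^2(f_2)=0$, $\chi_0^3(f_2)=1$, and $\rho(f_1)=\rho(f_2)=0$. Then I would observe that $\rho=\ln(2)\chi_1^2+\ln(3)\chi_1^3$ together with $\rho(f_i)=0$ and $\chi_1^2(f_i),\chi_1^3(f_i)\in\Z$ forces $\chi_1^2(f_i)=\chi_1^3(f_i)=0$, since $\ln(2)$ and $\ln(3)$ are linearly independent over $\Q$ (they are even linearly independent over the algebraic numbers, but $\Q$-independence is all that is needed here). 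Symmetrically, applying the second half of Lemma~\ref{lem:special_elements} yields elements $g_1,g_2\in F_{2,3}$ with $(\chi_1^2,\chi_1^3)$-values $(1,0)$ and $(0,1)$ respectively, and with $\chi_0^2(g_i)=\chi_0^3(g_i)=0$ by the same $\Q$-independence argument.

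With these four elements, the matrix whose rows are $(\chi_0^2(h),\chi_0^3(h),\chi_1^2(h),\chi_1^3(h))$ for $h\in\{f_1,f_2,g_1,g_2\}$ is the $4\times 4$ identity, so any linear relation $a\chi_0^2+b\chi_0^3+c\chi_1^2+d\chi_1^3=0$ evaluated on $f_1,f_2,g_1,g_2$ gives $a=b=c=d=0$. Hence the four characters are linearly independent in $\Hom(F_{2,3},\R)$, and since this space is $4$-dimensional by Citation~\ref{cit:rank2_ablnz}, they form a basis.

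There is essentially no obstacle here: the only subtle point is the $\Q$-linear independence of $\ln(2)$ and $\ln(3)$ needed to pass from $\rho(f)=0$ to the vanishing of the integer values $\chi_1^2(f)$ and $\chi_1^3(f)$ (and symmetrically for $\lambda$), which is standard and follows from unique factorization in $\Z$.
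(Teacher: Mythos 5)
Your proof is correct and follows essentially the same route as the paper: establish that $\Hom(F_{2,3},\R)\cong\R^4$ via Citation~\ref{cit:rank2_ablnz} and then exhibit, using Lemma~\ref{lem:special_elements}, four elements on which the evaluation matrix of the four characters is invertible. The only difference is that you spell out the detail the paper leaves implicit, namely that $\rho(f)=0$ together with integrality of $\chi_1^2(f),\chi_1^3(f)$ and the $\Q$-linear independence of $\ln(2)$ and $\ln(3)$ forces $\chi_1^2(f)=\chi_1^3(f)=0$, which is a correct and welcome clarification.
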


\begin{proof}
By Citation~\ref{cit:rank2_ablnz} we know that $\Hom(F_{2,3},\R)\cong \R^4$, so we just need to show that these four characters are linearly independent. It suffices to find, for each such character, an element of $F_{2,3}$ on which that character has a non-zero value and all other characters on the list have a zero value. This can easily be done following Lemma~\ref{lem:special_elements}.
\end{proof}

\subsection{The BNS-invariant}\label{sec:bns_stein}

Now that we have pinned down $\Hom(F_{2,3},\R)$ we can quickly compute $\Sigma^1(F_{2,3})$. The first BNSR-invariant $\Sigma^1$ is often called the \emph{BNS-invariant} (since Renz was only involved in developing $\Sigma^m$ for $m>1$ in \cite{bieri88}), and $\Sigma^1$ is often easier to compute than the higher $\Sigma^m$. In fact the BNS-invariant of $F_{2,3}$ is essentially already known, thanks to a result in the original Bieri--Neumann--Strebel paper \cite{bieri87}.

\begin{cit}\cite[Theorem~8.1]{bieri87}\label{cit:Sigma1}
$\Sigma^1(F_{2,3}) = \Sigma(F_{2,3})\setminus \{[\lambda],[\rho]\}$.
\end{cit}

It is clear that $F_{2,3}$ is irreducible and that $\lambda$ and $\rho$ are independent, in the sense of \cite{bieri87}, so the hypotheses of \cite[Theorem~8.1]{bieri87} are indeed met.

Since $\lambda=\ln(2)\chi_0^2+\ln(3)\chi_0^3$ and $\rho=\ln(2)\chi_1^2+\ln(3)\chi_1^3$, they are not discrete characters: their images in $\R$ are isomorphic to $\Z^2$. Thus we get:

\begin{corollary}\label{cor:rationally_full_Sigma1}
The kernel of any discrete character $\chi\colon F_{2,3} \to \Z$ is finitely generated. However, there exist characters $\chi\colon F_{2,3} \to \Z^2$ whose kernels are not finitely generated.
\end{corollary}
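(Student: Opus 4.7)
The plan is to deduce both statements directly from Citation~\ref{cit:Sigma1} together with the finiteness-properties criterion Citation~\ref{cit:bnsr_fin_props} (applied at level $n=1$), using the fact that $\ln 2$ and $\ln 3$ are linearly independent over $\Q$.

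For the first assertion, I would take a discrete character $\chi\colon F_{2,3}\to\Z$ and observe that by Citation~\ref{cit:bnsr_fin_props} the kernel of $\chi$ is finitely generated if and only if both $[\chi]$ and $[-\chi]$ lie in $\Sigma^1(F_{2,3})$, because any non-trivial character vanishing on $\ker\chi$ is a non-zero real multiple of $\chi$. By Citation~\ref{cit:Sigma1} I then only need to check that $[\pm\chi]\neq[\lambda]$ and $[\pm\chi]\neq[\rho]$. But $\lambda=\ln(2)\chi_0^2+\ln(3)\chi_0^3$ takes values in $\ln(2)\Z+\ln(3)\Z$, which is dense in $\R$ since $\ln(2)/\ln(3)$ is irrational; the same holds for $\rho$. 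Therefore no non-zero real multiple of $\lambda$ or $\rho$ can have discrete image in $\R$, so $\chi$ cannot be positively proportional to $\pm\lambda$ or $\pm\rho$, giving $[\pm\chi]\in\Sigma^1(F_{2,3})$.

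For the second assertion, I would exhibit an explicit offending character. The natural choice is
\[
\chi\defeq(\chi_0^2,\chi_0^3)\colon F_{2,3}\to\Z^2.
\]
By the $\Q$-linear independence of $\ln(2)$ and $\ln(3)$, an element $f\in F_{2,3}$ satisfies $\lambda(f)=0$ if and only if $\chi_0^2(f)=\chi_0^3(f)=0$, so $\ker\chi=\ker\lambda$. Thus $\lambda$ is a non-trivial character vanishing on $\ker\chi$, and since $[\lambda]\notin\Sigma^1(F_{2,3})$ by Citation~\ref{cit:Sigma1}, Citation~\ref{cit:bnsr_fin_props} (with $n=1$) implies $\ker\chi$ is not finitely generated.

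The whole argument is essentially a bookkeeping exercise in the character basis already set up, so there is no genuine obstacle; the only subtlety is remembering to test both $[\chi]$ and $[-\chi]$ for the first part and to exploit the irrationality of $\ln(2)/\ln(3)$ to rule out discreteness of multiples of $\lambda$ and $\rho$.
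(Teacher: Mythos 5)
Your proposal is correct and takes essentially the same route as the paper: both parts are deduced from Citation~\ref{cit:Sigma1} together with Citation~\ref{cit:bnsr_fin_props}, the key point being that $\lambda$ and $\rho$ are not discrete (the paper phrases this as $F_{2,3}/\ker(\chi)\cong\Z$ being unable to surject onto $F_{2,3}/\ker(\lambda)\cong\Z^2$, while you phrase it via density of $\ln(2)\Z+\ln(3)\Z$ in $\R$ -- the same underlying fact). For the second claim the paper likewise uses $\ker(\lambda)$, which coincides with the kernel of your character $(\chi_0^2,\chi_0^3)\colon F_{2,3}\to\Z^2$.
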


\begin{proof}
For the first claim, if $\ker(\chi)\le \ker(\lambda)$ then $F_{2,3}/\ker(\chi)\cong \Z$ surjects onto $F_{2,3}/\ker(\lambda)\cong \Z^2$, which is impossible. Similarly $\ker(\chi)$ cannot lie in $\ker(\rho)$. Since Citation~\ref{cit:Sigma1} says $[\lambda]$ and $[\rho]$ are the only elements of $\Sigma(F_{2,3})\setminus \Sigma^1(F_{2,3})$, Citation~\ref{cit:bnsr_fin_props} says $\ker(\chi)$ is finitely generated. For the second claim, just note that Citations~\ref{cit:bnsr_fin_props} and~\ref{cit:Sigma1} say that $\ker(\lambda)$ is not finitely generated.
\end{proof}

Citation~\ref{cit:Sigma1} says that every discrete character class lies in $\Sigma^1(F_{2,3})$. Later we will improve this to all discrete character classes lying in $\Sigma^\infty(F_{2,3})$ and all kernels of discrete characters being of type $\F_\infty$ (see Corollary~\ref{cor:rationally_full}).

\section{The main computation}\label{sec:computation}

In this section we compute $\Sigma^m(F_{2,3})$ for all $m$. We will first set up some results involving ascending HNN-extensions.

\subsection{Ascending HNN-extensions}\label{sec:hnn}

For our purposes, a group $G$ is said to decompose as an \emph{ascending HNN-extension} if there exist a subgroup $B\le G$ called the \emph{base} and an element $t\in G$ called the \emph{stable element} such that $G=\langle B,t\rangle$, $t^{-1}Bt\le B$, and $B\cap\langle t\rangle=\{1\}$. (This is an ``internal'' ascending HNN-extension, and it is equivalent to the usual ``external'' definition thanks to \cite[Lemma~3.1]{geoghegan01}.) We write $G=B*_t$ to indicate these data. Call an ascending HNN-extension \emph{properly ascending} if $t^{-1}Bt\ne B$. In the future we may write $B^t$ for $t^{-1}Bt$.

We will frequently need to consider certain subgroups of $F_{2,3}$ that decompose as ascending HNN-extensions with base $F_{2,3}[1/2,1]$ (recall that $F_{2,3}[1/2,1]$ is the subgroup of $F_{2,3}$ consisting of elements with support contained in $[\frac{1}{2},1]$). Our next result describes this situation. Say a character of $F_{2,3}$ is \emph{left-based} if it is of the form $a\chi_0^2+b\chi_0^3$ for $a,b\in\R$. Call a character \emph{right-based} if it is of the form $a\chi_1^2+b\chi_1^3$ for $a,b\in\R$. Every character of $F_{2,3}$ is a sum of a unique left-based and unique right-based character. Call a character \emph{one-sided} if it is left- or right-based.

\begin{lemma}[Decompose discrete kernels]\label{lem:F23_hnn}
Let $0\ne\chi\in\Hom(F_{2,3},\R)$ be a discrete left-based character. Then there exists $t\in \ker(\chi)$ such that $\ker(\chi)$ decomposes as an ascending HNN-extension with base $F_{2,3}[1/2,1]$ and stable letter $t$.
\end{lemma}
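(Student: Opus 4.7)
The plan is to construct an explicit stable element $t \in \ker(\chi)$ and verify the three conditions defining an internal ascending HNN-extension with base $F_{2,3}[\tfrac12,1]$. Since $\chi$ is left-based it factors through $\pi \defeq (\chi_0^2,\chi_0^3) \colon F_{2,3} \to \Z^2$ via a nonzero $\Z$-linear functional, and the assumption that $\chi$ be discrete forces the kernel of that functional in $\Z^2$ to be rank one, say $\Z\cdot(n_0,m_0)$ for a primitive integer vector $(n_0,m_0)$. Since $\log 2 / \log 3$ is irrational we have $2^{n_0} 3^{m_0} \ne 1$, and after flipping the sign of $(n_0,m_0)$ if necessary we may assume $2^{n_0} 3^{m_0} < 1$. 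I would then construct, by an elementary piecewise-linear construction, a specific $t \in F_{2,3}$ with constant slope $2^{n_0} 3^{m_0}$ on $[0, \tfrac12]$, one intermediate affine piece with slope in $\langle 2,3\rangle$ and breakpoints in $\Z[\tfrac{1}{6}]$, and equal to the identity on some $[x_0,1]$ with $x_0 \in (\tfrac12,1)$. By construction $\chi_0^2(t)=n_0$, $\chi_0^3(t)=m_0$, and $\rho(t)=0$, so $t \in \ker(\chi)$; moreover $t(\tfrac12) = 2^{n_0-1}3^{m_0} < \tfrac12$, and $x_0$ is an attracting fixed point of $t^{-1}$ on $[0,x_0]$, so $t^{-m}(y)\to x_0$ for every $y\in(0,x_0)$.

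Next I would verify the three internal HNN conditions. The inclusion $F_{2,3}[\tfrac12,1] \le \ker(\chi)$ is immediate since any $g \in F_{2,3}[\tfrac12,1]$ is the identity near $0$, hence annihilated by both $\chi_0^2$ and $\chi_0^3$. The conjugation condition $t^{-1}F_{2,3}[\tfrac12,1] t \le F_{2,3}[\tfrac12,1]$ follows from the identity $\Supp(t^{-1}gt) = t^{-1}(\Supp(g))$: for $g \in F_{2,3}[\tfrac12,1]$ this gives $\Supp(t^{-1}gt) \subseteq t^{-1}([\tfrac12,1]) = [t^{-1}(\tfrac12), 1] \subseteq [\tfrac12,1]$, using that $t(\tfrac12)<\tfrac12$ forces $t^{-1}(\tfrac12) > \tfrac12$. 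The intersection $F_{2,3}[\tfrac12,1] \cap \langle t\rangle = \{1\}$ holds because any nonzero power of $t$ strictly moves $\tfrac12$, while every element of $F_{2,3}[\tfrac12,1]$ fixes $\tfrac12$.

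The substantive step is the generation statement $\ker(\chi) = \langle F_{2,3}[\tfrac12,1], t\rangle$. Given $k \in \ker(\chi)$, the pair $(\chi_0^2(k),\chi_0^3(k))$ lies in $\Z\cdot(n_0,m_0)$, so equals $n(n_0,m_0)$ for some $n \in \Z$; then $kt^{-n}$ has initial slope $1$ and is therefore the identity on some $[0,\varepsilon]$. Conjugation yields $t^{-m}(kt^{-n})t^m$ with support in $t^{-m}([\varepsilon,1]) = [t^{-m}(\varepsilon),1]$, which lies in $[\tfrac12,1]$ for all sufficiently large $m$ because $t^{-m}(\varepsilon) \to x_0 > \tfrac12$. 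Hence $t^{-m}(kt^{-n})t^m \in F_{2,3}[\tfrac12,1]$, and rearranging exhibits $k$ as a word in $F_{2,3}[\tfrac12,1]$ and powers of $t$, completing the proof. The principal obstacle is the construction of $t$: one must simultaneously satisfy the integer slope condition coming from $\pi$, the $\Z[\tfrac{1}{6}]$-breakpoint and $\langle 2,3\rangle$-slope constraints, and the geometric condition $t(\tfrac12)<\tfrac12$ that drives the attracting-fixed-point argument. Everything after this construction is routine support bookkeeping.
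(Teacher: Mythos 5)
Your proposal is correct and follows essentially the same route as the paper: choose a primitive integer vector spanning the kernel direction of the functional, realize it by an element $t\in\ker(\chi)$ with $\lambda(t)<0$ whose support reaches past $\tfrac12$, verify the three internal HNN conditions, and obtain generation by first multiplying by a power of $t$ to land in $\ker(\lambda)$ (hence identity near $0$) and then conjugating by powers of $t$ to push supports into $[\tfrac12,1]$. The only real difference is cosmetic: you construct $t$ explicitly (which is indeed feasible, e.g.\ taking intermediate slope $4$ and $x_0=(4-2^{n_0}3^{m_0})/6\in\Z[\tfrac16]\cap(\tfrac12,1)$), whereas the paper simply invokes Lemma~\ref{lem:special_elements} to produce a $t$ with the prescribed $\chi_0^2,\chi_0^3$ values, $\rho(t)=0$, and support containing $(0,3/4)$.
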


\begin{proof}
Since $\chi$ is discrete, up to positive scaling we can assume $\chi=a\chi_0^2+b\chi_0^3$ for some coprime $a,b\in\Z$ (note that $0$ is coprime to $\pm1$, so even if one of $a$ or $b$ is $0$ this is still accurate). Since $\ln(2)/\ln(3)\not\in\Q$, we have either $b\ln(2)-a\ln(3)<0$ or $-b\ln(2)+a\ln(3)<0$. In the first case, choose $t\in F_{2,3}$ with $\chi_0^2(t)=b$ and $\chi_0^3(t)=-a$, such that $t$ has support satisfying $(0,3/4)\subseteq \Supp(t)$ (this is possible by Lemma~\ref{lem:special_elements}). In the second case, choose $t\in F_{2,3}$ with $\chi_0^2(t)=-b$ and $\chi_0^3(t)=a$, with $(0,3/4)\subseteq \Supp(t)$. In either case we have $\lambda(t)<0$ and $\chi(t)=0$. Now let $t'$ be any element of $\ker(\chi)$, so $a\chi_0^2(t')+b\chi_0^3(t')=0$. Since $a$ and $b$ are coprime, this implies that $\chi_0^2(t')=bk$ and $\chi_0^3(t')=-ak$ for some $k\in\Z$ (to be clear, this works even if one of $a$ or $b$ is $0$, since then the other one is $\pm1$). In particular $\chi_0^n(t^{\pm k})=\chi_0^n(t')$ for $n=2,3$, so $\lambda(t^{\pm k})=\lambda(t')$, which shows that $t'\ker(\lambda)=t^{\pm k}\ker(\lambda)$. By now we know that $\ker(\chi)$ is generated by $t$ and $\ker(\lambda)$. Since $(0,3/4)\subseteq\Supp(t)$, every element of $\ker(\lambda)$ is conjugate via a power of $t$ to an element of $F_{2,3}[1/2,1]$. Hence $\ker(\chi)$ is generated by $t$ and $F_{2,3}[1/2,1]$. It is also clear that $\langle t\rangle \cap F_{2,3}[1/2,1]=\{1\}$. Finally note that $F_{2,3}[1/2,1]^t\subseteq F_{2,3}[1/2,1]$ since $(0,3/4)\subseteq\Supp(t)$ and $\lambda(t)<0$. We conclude that $\ker(\chi)=F_{2,3}[1/2,1]*_t$.
\end{proof}

There is also a right-based version of the above, using $F_{2,3}[0,1/2]$, which works analogously to the left-based version.

Let us now record a number of general results about the interaction between ascending HNN-extensions and BNSR-invariants.

\begin{cit}\cite[Theorem~2.1]{bieri10}\label{cit:hnn_dead_base}
Let $G$ be a group of type $\F_\infty$ that decomposes as an ascending HNN-extension $G=B*_t$, with the base $B$ of type $\F_\infty$. Let $\chi\colon G\to\R$ be a character satisfying $\chi(B)=0$ and $\chi(t)>0$. Then $[\chi]\in\Sigma^\infty(G)$.
\end{cit}

\begin{cit}\cite[Theorem~2.3]{bieri10}\label{cit:hnn_live_base}
Let $G$ be a group of type $\F_\infty$ that decomposes as an ascending HNN-extension $G=B*_t$, with the base $B$ of type $\F_\infty$. Let $\chi\colon G\to\R$ be a character with $\chi(B)\ne 0$ and $[\chi|_B]\in\Sigma^\infty(B)$. Then $[\chi]\in\Sigma^\infty(G)$.
\end{cit}

Note that a special case of Citation~\ref{cit:hnn_live_base} is when the ascending HNN-extension is not properly ascending, i.e., $t^{-1}Bt=B$, which means $B$ is the kernel of a map from $G$ onto $\Z$. More generally, we have the following:

\begin{cit}\cite[Lemma~4.5]{meinert96}\label{cit:ker_to_Zn}
Let $G$ be a group of type $\F_\infty$ and $N$ a normal subgroup of type $\F_\infty$ with $G/N$ polycyclic-by-finite (for example free abelian). For $\chi\in\Hom(G,\R)$, if $[\chi|_N]\in\Sigma^\infty(N)$ then $[\chi]\in\Sigma^\infty(G)$.
\end{cit}

Finally, let us record the following easy fact:

\begin{observation}\label{obs:hnn_Finfty}
If $G=B*_t$ is an ascending HNN-extension and $B$ is of type $\F_\infty$, then so is $G$.
\end{observation}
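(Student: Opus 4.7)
The plan is to invoke Bass-Serre theory. The ascending HNN-extension $G=B*_t$ acts on its Bass-Serre tree $T$ with exactly one orbit of vertices, whose stabilizer is a conjugate of $B$, and exactly one orbit of edges, whose stabilizer is a conjugate of $t^{-1}Bt$. Since $B$ is of type $\F_\infty$ by hypothesis, and $t^{-1}Bt$ is isomorphic to $B$ via conjugation by $t$, every cell stabilizer is of type $\F_\infty$.

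With this setup in hand, the conclusion follows from the standard result that if a group acts cocompactly on a contractible CW-complex with every cell stabilizer of type $\F_\infty$, then the group itself is of type $\F_\infty$ (see, e.g., Geoghegan's \emph{Topological Methods in Group Theory}). The tree $T$ is contractible, $G$ acts on it with only two cell orbits, and all stabilizers are of type $\F_\infty$, so the hypotheses are met. There is no genuine obstacle here beyond picking a reference, which is why the authors flag this as an ``easy fact''; alternatively, one could construct a $K(G,1)$ directly as the mapping torus of a cellular self-map of a $K(B,1)$ realizing the injection $b\mapsto t^{-1}bt$, whose finite $n$-skeletons transfer from those of $K(B,1)$, and verify asphericity by identifying its universal cover with a tree-like arrangement of copies of $\widetilde{K(B,1)}$ indexed by $T$.
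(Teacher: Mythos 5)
Your argument is correct and is essentially identical to the paper's proof: both use the cocompact action of $G$ on the Bass--Serre tree of the ascending HNN-extension, observe that all vertex and edge stabilizers are isomorphic to $B$ and hence of type $\F_\infty$, and then invoke the standard criterion (Brown's finiteness result) for actions on contractible complexes with cocompact quotient and well-behaved stabilizers. The mapping-torus alternative you sketch is a fine additional route but is not needed.
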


\begin{proof}
Note that $G$ acts cocompactly on a contractible complex, namely the Bass--Serre tree for the ascending HNN-extension. Every vertex and edge stabilizer is isomorphic to $B$, hence is $\F_\infty$. The result now follows from standard facts about finiteness properties, see, e.g., \cite[Proposition~1.1]{brown87}.
\end{proof}

\subsection{The computation}\label{sec:subsec_computation}

Now we are ready to compute the $\Sigma^m(F_{2,3})$. First we focus on one-sided characters.

\begin{lemma}[Discrete one-sided]\label{lem:left_discrete}
Let $0\ne\chi\colon F_{2,3}\to \Z$ be a discrete one-sided character. Then $\ker(\chi)$ is of type $\F_\infty$, and so $[\chi]\in\Sigma^\infty(F_{2,3})$.
\end{lemma}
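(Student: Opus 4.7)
The plan is to reduce to Lemma~\ref{lem:F23_hnn}, Observation~\ref{obs:hnn_Finfty}, and Citation~\ref{cit:bnsr_fin_props}, so the only real work is assembling pieces that have already been set up.

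I would first treat the left-based case. Lemma~\ref{lem:F23_hnn} supplies an element $t\in\ker(\chi)$ realizing $\ker(\chi)$ as an ascending HNN-extension $F_{2,3}[1/2,1]*_t$. By Lemma~\ref{lem:conjugator} the base $F_{2,3}[1/2,1]$ is isomorphic to $F_{2,3}$, which Stein proved to be of type $\F_\infty$. Observation~\ref{obs:hnn_Finfty} then gives that $\ker(\chi)$ is of type $\F_\infty$. The right-based case is identical once one invokes the right-based analog of Lemma~\ref{lem:F23_hnn} (which decomposes $\ker(\chi)$ with base $F_{2,3}[0,1/2]$, itself conjugate to $F_{2,3}$ by a mirror-image version of Lemma~\ref{lem:conjugator}).

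To deduce the $\Sigma^\infty$ statement, I would note that $\chi$ takes values in $\Z$, so in particular $[F_{2,3},F_{2,3}]\le\ker(\chi)$, and the non-trivial characters of $F_{2,3}$ vanishing on $\ker(\chi)$ are exactly the non-zero real multiples of $\chi$. Citation~\ref{cit:bnsr_fin_props} then immediately translates ``$\ker(\chi)$ is of type $\F_\infty$'' into ``$[r\chi]\in\Sigma^\infty(F_{2,3})$ for every non-zero $r\in\R$'', which in particular yields $[\chi]\in\Sigma^\infty(F_{2,3})$. There is essentially no obstacle in the argument; the only point requiring a moment's care is confirming the right-based analog of the HNN decomposition, which is precisely the case the text has already flagged as ``analogous'' immediately after Lemma~\ref{lem:F23_hnn}.
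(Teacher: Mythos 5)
Your proposal is correct and is essentially the paper's own argument: reduce to the HNN decomposition of $\ker(\chi)$ from Lemma~\ref{lem:F23_hnn} (and its right-based analog), identify the base $F_{2,3}[1/2,1]$ (resp.\ $F_{2,3}[0,1/2]$) with $F_{2,3}$ via Lemma~\ref{lem:conjugator}, apply Observation~\ref{obs:hnn_Finfty} to get type $\F_\infty$, and then invoke Citation~\ref{cit:bnsr_fin_props} for the $\Sigma^\infty$ conclusion. Your extra remark that the characters vanishing on $\ker(\chi)$ are exactly the nonzero multiples of $\chi$ just makes explicit the step the paper leaves implicit, so there is nothing to correct.
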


\begin{proof}
We will do the left-based version, and the right-based version is analogous. By Citation~\ref{cit:bnsr_fin_props} we just need to prove that $\ker(\chi)$ is of type $\F_\infty$. By Lemma~\ref{lem:F23_hnn}, $\ker(\chi)$ decomposes as an ascending HNN-extension with base $F_{2,3}[1/2,1]$. This base is isomorphic to $F_{2,3}$ by Lemma~\ref{lem:conjugator}, hence is of type $\F_\infty$. We conclude that $\ker(\chi)$ is of type $\F_\infty$ by Observation~\ref{obs:hnn_Finfty}.
\end{proof}

\begin{proposition}[One-sided]\label{prop:left_any}
Let $0\ne\chi\colon F_{2,3}\to \R$ be any one-sided character such that $[\chi]\ne[\lambda],[\rho]$. Then $[\chi]\in\Sigma^\infty(F_{2,3})$.
\end{proposition}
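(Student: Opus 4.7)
The plan is to bootstrap from the discrete case handled in Lemma~\ref{lem:left_discrete} by choosing an auxiliary discrete character as a ``witness''. By the symmetry between the left-based setting and the right-based version (using $F_{2,3}[0,1/2]$ in place of $F_{2,3}[1/2,1]$, as noted after Lemma~\ref{lem:F23_hnn}), I would treat only the left-based case, writing $\chi = a'\chi_0^2 + b'\chi_0^3$ for some $(a',b') \in \R^2$. Since $[\chi]\ne[\lambda]$, the pair $(a',b')$ is not proportional to $(\ln 2, \ln 3)$.

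The witness will be a discrete character $\psi \defeq a\chi_0^2 + b\chi_0^3$ for coprime integers $(a,b)$ to be specified below. Lemma~\ref{lem:F23_hnn} provides an ascending HNN-decomposition $\ker(\psi) = F_{2,3}[1/2,1] *_t$ with the stable letter satisfying $(\chi_0^2(t),\chi_0^3(t)) = (b,-a)$ or $(-b,a)$ according to the sign of $b\ln 2 - a\ln 3$, and Lemma~\ref{lem:left_discrete} gives that $\ker(\psi)$ is of type $\F_\infty$. Since every left-based character vanishes on $F_{2,3}[1/2,1]$, we automatically have $\chi|_{F_{2,3}[1/2,1]} = 0$, while a direct computation in either case yields the biconditional
\[
\chi(t) > 0 \iff (a'b - b'a)(b\ln 2 - a\ln 3) < 0\text{.}
\]
Reading the two factors as $2\times 2$ determinants, this says precisely that the direction of $(a,b) \in \R^2$ lies in the projective line $P(\R^2)$ strictly between the directions of $(a',b')$ and $(\ln 2, \ln 3)$.

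Since $[\chi] \ne [\lambda]$, these two directions divide $P(\R^2)$ into two non-empty open arcs, and rational directions are dense there, so I can pick coprime integers $(a,b)$ whose direction lies in either arc. For this $\psi$, Citation~\ref{cit:hnn_dead_base} applied to $\ker(\psi) = F_{2,3}[1/2,1] *_t$ yields $[\chi|_{\ker(\psi)}] \in \Sigma^\infty(\ker(\psi))$, and Citation~\ref{cit:ker_to_Zn} applied to the normal subgroup $\ker(\psi) \trianglelefteq F_{2,3}$ with cyclic quotient $F_{2,3}/\ker(\psi) \cong \Z$ then upgrades this to $[\chi] \in \Sigma^\infty(F_{2,3})$. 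The main obstacle is the selection of a suitable witness $\psi$, but as sketched this reduces to a density statement on $P(\R^2)$.
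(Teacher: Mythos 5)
Your proposal is correct and follows essentially the same route as the paper: reduce to the kernel $K$ of an auxiliary discrete left-based character, which Lemma~\ref{lem:F23_hnn} exhibits as an ascending HNN-extension $F_{2,3}[1/2,1]*_t$ with $\lambda(t)<0$, then apply Citation~\ref{cit:hnn_dead_base} (the restriction of $\chi$ vanishes on the base and is positive on $t$) followed by Citation~\ref{cit:ker_to_Zn} to pass from $K$ back to $F_{2,3}$; the only difference is that the paper chooses the auxiliary character as $\chi+c\lambda$ for suitable $c>0$, so that $\chi|_K=-c\lambda|_K$ and positivity on $t$ is immediate, handling $[\chi]=[-\lambda]$ as a separate easy case, whereas you choose coprime $(a,b)$ via your determinant criterion. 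One small imprecision: the hypothesis only excludes $[\chi]=[\lambda]$ (and the irrelevant $[\rho]$), so $[\chi]=[-\lambda]$ is allowed, and in that case $(a',b')$ and $(\ln 2,\ln 3)$ determine the same point of $P(\R^2)$, so they do not bound two arcs and your density argument does not literally apply. This is easily repaired by your own displayed biconditional: if $(a',b')=-c(\ln 2,\ln 3)$ with $c>0$ then the product equals $-c(b\ln 2-a\ln 3)^2<0$ for every coprime pair $(a,b)$, since $\ln 2/\ln 3\notin\Q$, so any witness works and the argument goes through in all cases.
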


\begin{proof}
We will assume $\chi$ is left-based, and the right-based case works analogously.

First suppose $[\chi]\ne[-\lambda]$. Since $[\chi]\ne[\pm\lambda]$ we know $\chi$ and $\lambda$ are linearly independent, and hence span the $2$-dimensional subspace of $\Hom(F_{2,3},\R)$ consisting of all left-based characters. In particular we can choose $c>0$ such that $\chi+c\lambda$ is non-zero and discrete. By Lemma~\ref{lem:left_discrete}, $K\defeq \ker(\chi+c\lambda)$ is of type $\F_\infty$. By Citation~\ref{cit:ker_to_Zn} it now suffices to prove that the restriction of $\chi$ to $K$, which equals the restriction of $-c\lambda$ to $K$, lies in $\Sigma^\infty(K)$. This is equivalent to proving that $[-\lambda|_K]\in\Sigma^\infty(K)$. Since $\chi+c\lambda$ is non-zero, left-based, and discrete, by Lemma~\ref{lem:F23_hnn} we can choose $t\in K$ such that $K=F_{2,3}[1/2,1]*_t$. By the proof of Lemma~\ref{lem:F23_hnn} we can assume $\lambda(t)<0$. Now we have $-\lambda|_K(t)>0$ and $-\lambda|_K(F_{2,3}[1/2,1])=0$, so $[-\lambda|_K]\in\Sigma^\infty(K)$ by Citation~\ref{cit:hnn_dead_base}.

Now suppose $[\chi]=[-\lambda]$. Let $K\defeq \ker(\chi_0^3)$, so $K$ is of type $\F_\infty$ by Lemma~\ref{lem:left_discrete}. Hence it suffices to show that $[-\lambda|_K]\in\Sigma^\infty(K)$. Similar to the previous case, we can choose $t$ such that $K=F_{2,3}[1/2,1]*_t$ and $\lambda(t)<0$. Now $-\lambda|_K(t)>0$ and $-\lambda|_K(F_{2,3}[1/2,1])=0$, so $[-\lambda|_K]\in\Sigma^\infty(K)$ by Citation~\ref{cit:hnn_dead_base}.
\end{proof}

\begin{corollary}[Not a positive combo of $\lambda$ and $\rho$]\label{cor:all_in_Sigmainfty}
Let $0\ne\chi\colon F_{2,3}\to\R$ be a character satisfying $\chi=\chi_L+\chi_R$ for left-based $0\ne\chi_L$ and right-based $0\ne\chi_R$, such that either $[\chi_L]\ne[\lambda]$ or $[\chi_R]\ne[\rho]$. Then $[\chi]\in\Sigma^\infty(F_{2,3})$.
\end{corollary}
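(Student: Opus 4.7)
The plan is to parallel the strategy of Proposition~\ref{prop:left_any}: use Citation~\ref{cit:ker_to_Zn} to reduce the claim to showing that $\chi$ restricts into $\Sigma^\infty$ of some normal subgroup $K$ of type $\F_\infty$ with $F_{2,3}/K$ abelian, then apply Citation~\ref{cit:hnn_live_base} after decomposing $K$ as an ascending HNN-extension via Lemma~\ref{lem:F23_hnn}. By the evident left--right symmetry of $F_{2,3}$, the hypothesis lets me assume without loss of generality that $[\chi_R]\ne[\rho]$; the other case ($[\chi_L]\ne[\lambda]$ but $[\chi_R]=[\rho]$) is handled identically after swapping sides and using the right-based analog of Lemma~\ref{lem:F23_hnn} with base $F_{2,3}[0,1/2]$.

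Concretely, I would choose any nonzero discrete left-based character $\psi$, for instance $\psi\defeq\chi_0^2$, and set $K\defeq\ker(\psi)$. By Lemma~\ref{lem:left_discrete} the group $K$ is of type $\F_\infty$, and $F_{2,3}/K\cong\Z$. By Lemma~\ref{lem:F23_hnn}, $K$ decomposes as an ascending HNN-extension $K=B\ast_t$ with base $B\defeq F_{2,3}[1/2,1]$. Since $\chi_R\ne 0$ the character $\chi$ is not proportional to $\psi$, so Citation~\ref{cit:ker_to_Zn} reduces the corollary to showing $[\chi|_K]\in\Sigma^\infty(K)$.

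For this reduced claim, I would apply Citation~\ref{cit:hnn_live_base} to the decomposition $K=B\ast_t$ with the character $\chi|_K$. Every element of $B$ acts as the identity near $0$, so $\chi_L$ vanishes on $B$; hence $\chi|_K|_B=\chi_R|_B$, which is nonzero because $\chi_R\ne 0$ (the ``live base'' hypothesis is met). Under the isomorphism $B\cong F_{2,3}$ of Lemma~\ref{lem:conjugator}, the conjugator fixes the endpoint $1$, so by the chain rule slopes at $1$ are preserved; consequently $\chi_R|_B$ corresponds to the character $\chi_R$ on the abstract $F_{2,3}$, which is still right-based, satisfies $[\chi_R]\ne[\rho]$ by assumption, and trivially satisfies $[\chi_R]\ne[\lambda]$. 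Proposition~\ref{prop:left_any} then gives $[\chi_R]\in\Sigma^\infty(F_{2,3})$, which transports to $[\chi_R|_B]\in\Sigma^\infty(B)$. Citation~\ref{cit:hnn_live_base} now yields $[\chi|_K]\in\Sigma^\infty(K)$, and Citation~\ref{cit:ker_to_Zn} gives $[\chi]\in\Sigma^\infty(F_{2,3})$.

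The main subtlety I foresee is the bookkeeping in the last paragraph: one must verify that under the Lemma~\ref{lem:conjugator} identification, $\chi_R|_B$ really corresponds to a \emph{right-based} character of the abstract $F_{2,3}$ rather than a mixed one. Were it to pick up a left-based component through the conjugation, the hypothesis $[\chi_R]\ne[\rho]$ would no longer suffice to invoke Proposition~\ref{prop:left_any}. This issue is benign because the conjugator constructed in Lemma~\ref{lem:conjugator} can be chosen to fix $1$, keeping $\chi_1^2$ and $\chi_1^3$ intact on $B$.
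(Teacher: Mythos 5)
Your proposal is correct and follows essentially the same route as the paper: reduce via Citation~\ref{cit:ker_to_Zn} to $K=\ker(\chi_0^2)$, then apply Citation~\ref{cit:hnn_live_base} to the decomposition $K=F_{2,3}[1/2,1]\ast_t$ from Lemma~\ref{lem:F23_hnn}, identify the restriction of $\chi$ to the base with $\chi_R$ via Lemma~\ref{lem:conjugator}, and invoke Proposition~\ref{prop:left_any}. Your extra care about the identification preserving right-based characters (the conjugator fixing $1$) is a correct verification of a point the paper states without elaboration.
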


\begin{proof}
The two cases are analogous, so without loss of generality $[\chi_R]\ne[\rho]$. Let $H=\ker(\chi_0^2)$, so Lemma~\ref{lem:F23_hnn} says $H$ is an ascending HNN extension with base $F_{2,3}[1/2,1]$. Since $H$ is of type $\F_\infty$ by Lemma~\ref{lem:left_discrete}, Citation~\ref{cit:ker_to_Zn} says it suffices to show that the restriction of $\chi$ to $H$ is in $\Sigma^\infty(H)$. Since $H$ is an ascending HNN extension of $F_{2,3}[1/2,1]$, and $F_{2,3}[1/2,1]$ is of type $\F_\infty$ since it is isomorphic to $F_{2,3}$ (Lemma~\ref{lem:conjugator}), it suffices by Citation~\ref{cit:hnn_live_base} to show that the restriction of $\chi$ to $F_{2,3}[1/2,1]$ is in $\Sigma^\infty(F_{2,3}[1/2,1])$. This restriction coincides with the restriction of $\chi_R$ to $F_{2,3}[1/2,1]$. Identifying $F_{2,3}[1/2,1]$ isomorphically with $F_{2,3}$, the restriction of $\chi_R$ is identified with $\chi_R$ itself, so it remains to show that $[\chi_R]\in\Sigma^\infty(F_{2,3})$. Since $[\chi_R]\ne[\rho]$, Proposition~\ref{prop:left_any} says that indeed $[\chi_R]\in\Sigma^\infty(F_{2,3})$ and we are done.
\end{proof}

This quickly leads to a full computation of all the $\Sigma^m(F_{2,3})$:

\begin{theorem}\label{thrm:main}
Let $0\ne\chi\in\Hom(F_{2,3},\R)$. If $[\chi]=[\lambda],[\rho]$ then $[\chi]\not\in\Sigma^1(F_{2,3})$. If $\chi=a\lambda+b\rho$ for $a,b>0$ then $[\chi]\in\Sigma^1(F_{2,3})\setminus \Sigma^2(F_{2,3})$. Otherwise $[\chi]\in\Sigma^\infty(F_{2,3})$.
\end{theorem}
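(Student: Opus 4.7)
My plan is to handle the three cases of the theorem separately, reducing two of them to earlier results and attacking the third with a direct geometric argument. If $[\chi]=[\lambda]$ or $[\chi]=[\rho]$, then $[\chi]\notin\Sigma^1(F_{2,3})\supseteq\Sigma^m(F_{2,3})$ for every $m$, immediately from Citation~\ref{cit:Sigma1}. For the ``otherwise'' case, decompose $\chi=\chi_L+\chi_R$ into its unique left- and right-based components: if $\chi$ is one-sided then the hypothesis $[\chi]\ne[\lambda],[\rho]$ lets me apply Proposition~\ref{prop:left_any}, while if both $\chi_L$ and $\chi_R$ are nonzero then the failure of $\chi$ to be of the form $a\lambda+b\rho$ with $a,b>0$ forces $[\chi_L]\ne[\lambda]$ or $[\chi_R]\ne[\rho]$, so Corollary~\ref{cor:all_in_Sigmainfty} applies.

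The remaining case is $\chi=a\lambda+b\rho$ with $a,b>0$. Here the inclusion $[\chi]\in\Sigma^1(F_{2,3})$ is immediate from Citation~\ref{cit:Sigma1}, because $\lambda$ and $\rho$ are linearly independent (Corollary~\ref{cor:LI}), so $[\chi]\ne[\lambda],[\rho]$. The real content of the theorem, and the main obstacle, is to prove $[\chi]\notin\Sigma^2(F_{2,3})$. Since such a $\chi$ is in general non-discrete, Citation~\ref{cit:bnsr_fin_props} does not convert this into a finiteness-property question about $\ker(\chi)$; instead I would pass to Stein's contractible $F_{2,3}$-CW-complex $\mathscr{X}$ from \cite{stein92}, equip it with a $\chi$-equivariant height function induced by $\chi=a\lambda+b\rho$, and apply Morse-theoretic arguments to the sublevel filtration $(\mathscr{X}_{\chi\ge t})_{t\in\R}$.

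The heart of the argument is to compute the descending links of cells in $\mathscr{X}$. Because $\lambda$ depends only on the initial derivative at $0$ and $\rho$ only on the final derivative at $1$, these two pieces of data decouple, and the descending link of a typical cell should split as the topological join of a ``left descending link'' controlled by $a\lambda$ and a ``right descending link'' controlled by $b\rho$. Each end-descending link is a poset of local subdivisions of the interval into $2$- or $3$-carets at the relevant endpoint; for cells corresponding to sufficiently refined forests, each such end-piece is disconnected, because one has two genuinely independent choices of local refinement (a $2$-caret or a $3$-caret). Since the join of two disconnected spaces has nontrivial $\pi_1$, the full descending link fails to be simply connected for such cells, and the standard translation from descending-link topology to filtration-connectivity (in the spirit of \cite{bieri10,zaremsky17F_n}) then obstructs the filtration from being essentially $1$-connected, giving $[\chi]\notin\Sigma^2(F_{2,3})$.

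The hardest step will be making this join decomposition rigorous in the Stein--Farley combinatorics for $F_{2,3}$, where cells are indexed by mixed $2$- and $3$-caret forests and both $\chi_L$ and $\chi_R$ must be tracked simultaneously along the faces of a cell; one also has to verify that the disconnectedness of the end-pieces persists as $t\to-\infty$, so that the non-trivial loops produced in the descending links survive in the limit and cannot be killed by going further down in the filtration.
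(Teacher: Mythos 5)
Your handling of the first and third cases is correct and is essentially the paper's own argument: Citation~\ref{cit:Sigma1} disposes of $[\lambda]$ and $[\rho]$, and the ``otherwise'' case is exactly the combination of Proposition~\ref{prop:left_any} (one-sided $\chi$) and Corollary~\ref{cor:all_in_Sigmainfty} (two-sided $\chi$); your observation that $[\chi_L]=[\lambda]$ and $[\chi_R]=[\rho]$ would force $\chi=a\lambda+b\rho$ with $a,b>0$ is the right reduction, and $[a\lambda+b\rho]\in\Sigma^1(F_{2,3})$ for $a,b>0$ is indeed immediate from Citation~\ref{cit:Sigma1}.

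The genuine gap is in your plan for showing $[a\lambda+b\rho]\notin\Sigma^2(F_{2,3})$ when $a,b>0$. Morse theory on a Stein--Farley-type complex only gives a one-way implication: sufficiently connected descending links imply the filtration is essentially connected, hence membership in the invariant. The converse fails, so exhibiting cells whose descending links are not simply connected does not show that the filtration $(Y_{\chi\ge t})_{t\in\R}$ fails to be essentially $1$-connected --- the loops you produce in a level set $Y_{\chi\ge t}$ may well bound after passing to $Y_{\chi\ge s}$ for some $s\le t$, and ruling this out for \emph{every} $s$ is precisely the content of non-membership. You flag this yourself in your final sentence (``the non-trivial loops \dots survive in the limit and cannot be killed''), but no mechanism is offered for proving that survival, and that is the entire difficulty of this case; as written, the argument establishes nothing beyond what the Morse Lemma cannot conclude. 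Note also that since $\chi$ here is generally non-discrete, you cannot fall back on Citation~\ref{cit:bnsr_fin_props} either. The paper sidesteps all of this with a short algebraic argument: by Brin--Squier \cite[Theorem~3.1]{brin85} the group $F_{2,3}$ contains no non-abelian free subgroups, so Kochloukova's Theorem~A1 from \cite{kochloukova02} applies and, because $[\lambda],[\rho]\notin\Sigma^1(F_{2,3})$, it yields directly that $[a\lambda+b\rho]\notin\Sigma^2(F_{2,3})$ for $a,b>0$. If you insist on a self-contained geometric proof, you would need an additional invariant (for instance an explicit family of loops together with an argument that they remain essential in $Y_{\chi\ge s}$ for all $s$) replacing the missing step; that is substantially harder than everything else in the theorem, whereas the citation route is two lines.
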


\begin{proof}
The computation of $\Sigma^1(F_{2,3})$ is in Citation~\ref{cit:Sigma1}. If $\chi=a\lambda+b\rho$ for $a,b>0$ then the fact that $[\chi]\not\in\Sigma^2(F_{2,3})$ follows from \cite[Theorem~A1]{kochloukova02} (which applies since $F_{2,3}$ has no non-abelian free subgroups \cite[Theorem~3.1]{brin85}). If $[\chi]=[-\lambda],[-\rho]$ then $[\chi]\in\Sigma^\infty(F_{2,3})$ by Proposition~\ref{prop:left_any}. In all other cases, $\chi$ satisfies the assumptions of Corollary~\ref{cor:all_in_Sigmainfty} and so $[\chi]\in\Sigma^\infty(F_{2,3})$.
\end{proof}

\section{Applications and questions}\label{sec:apps}

Having computed all the $\Sigma^m(F_{2,3})$, let us discuss some applications. First we get a full classification of the finiteness properties of normal subgroups of $F_{2,3}$.

\begin{corollary}\label{cor:main}
Let $N$ be a non-trivial normal subgroup of $F_{2,3}$. Then $N$ is finitely generated if and only if $\lambda(N)\ne 0$ and $\rho(N)\ne 0$, $N$ is finitely presented if and only if $(a\lambda+b\rho)(N)\ne 0$ for all $a,b\ge0$, and $N$ is of type $\F_\infty$ if and only if it is finitely presented.
\end{corollary}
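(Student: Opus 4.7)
The plan is to deduce all three equivalences directly from Theorem~\ref{thrm:main} via Citation~\ref{cit:bnsr_fin_props}. That citation requires the subgroup in question to contain $[F_{2,3},F_{2,3}]$, which comes for free from Lemma~\ref{lem:normal_contain_comm}: since $N$ is non-trivial and normal, it automatically contains the commutator subgroup. Hence $N$ is of type $\F_n$ precisely when every non-zero $\chi\in\Hom(F_{2,3},\R)$ with $\chi(N)=0$ represents a point of $\Sigma^n(F_{2,3})$.

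Next I would introduce the linear subspace $V_N\defeq\{\chi\in\Hom(F_{2,3},\R)\mid \chi(N)=0\}$ and identify its projectivization with a subset of $\Sigma(F_{2,3})$. The remaining task is purely dictionary-translation: compare $V_N\setminus\{0\}$ (modulo positive scaling) with the complement $\Sigma(F_{2,3})\setminus\Sigma^n(F_{2,3})$ supplied by Theorem~\ref{thrm:main}. Crucially, the exceptional sets appearing in Theorem~\ref{thrm:main} are themselves closed under positive scaling, so this comparison is well-defined on the character sphere.

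For each of the three cases I would then read off the condition. For finite generation, the exceptional set is $\{[\lambda],[\rho]\}$, so $P(V_N)\subseteq\Sigma^1(F_{2,3})$ amounts to $\lambda\notin V_N$ and $\rho\notin V_N$, i.e., $\lambda(N)\ne 0$ and $\rho(N)\ne 0$. For finite presentability, the exceptional set is the closed cone $\{[a\lambda+b\rho]\mid a,b\ge 0,\ (a,b)\ne(0,0)\}$, so $P(V_N)\subseteq\Sigma^2(F_{2,3})$ translates exactly to $(a\lambda+b\rho)(N)\ne 0$ for all such $a,b$. For type $\F_\infty$, Theorem~\ref{thrm:main} states $\Sigma^m(F_{2,3})=\Sigma^2(F_{2,3})$ for all $m\ge 2$, so $\Sigma^\infty(F_{2,3})=\Sigma^2(F_{2,3})$, and therefore type $\F_\infty$ coincides with finite presentability.

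There is no genuine obstacle: all of the content has already been paid for in Theorem~\ref{thrm:main} (the hard computation) and in Lemma~\ref{lem:normal_contain_comm} (which lets Citation~\ref{cit:bnsr_fin_props} apply to arbitrary non-trivial normal subgroups, not merely those containing $[F_{2,3},F_{2,3}]$ by hypothesis). The only minor cosmetic point to be careful about is that the statement ``$(a\lambda+b\rho)(N)\ne 0$ for all $a,b\ge 0$'' is meant with $(a,b)\ne(0,0)$, matching the fact that the $\Sigma^1\setminus\Sigma^2$ part of the sphere from Theorem~\ref{thrm:main} is the open cone on $\lambda$ and $\rho$ with its two vertices adjoined.
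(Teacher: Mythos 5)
Your proposal is correct and is essentially the paper's own argument: invoke Lemma~\ref{lem:normal_contain_comm} to see $N\supseteq[F_{2,3},F_{2,3}]$, then apply Citation~\ref{cit:bnsr_fin_props} to the computation in Theorem~\ref{thrm:main} and translate the three exceptional sets into the stated conditions on $\lambda(N)$, $\rho(N)$, and $(a\lambda+b\rho)(N)$. Your extra remarks (the scaling-invariance of the exceptional sets and the implicit $(a,b)\ne(0,0)$) are accurate but only make explicit what the paper leaves to the reader.
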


\begin{proof}
By Lemma~\ref{lem:normal_contain_comm}, $N$ contains $[F_{2,3},F_{2,3}]$. The result now follows by applying Citation~\ref{cit:bnsr_fin_props} to the computation in Theorem~\ref{thrm:main}.
\end{proof}

Since $F_{2,3}/[F_{2,3},F_{2,3}] \cong \Z^4$, we get a normal subgroup of $F_{2,3}$ for every subgroup of $\Z^4$ by taking the preimage in $F_{2,3}$. Hence the family of $N$ to which Corollary~\ref{cor:main} applies is quite robust.

Thanks to the non-discreteness of $\lambda$ and $\rho$, we get the following peculiar result, which improves Corollary~\ref{cor:rationally_full_Sigma1}.

\begin{corollary}\label{cor:rationally_full}
The kernel of any character $\chi\colon F_{2,3} \to \Z$ is of type $\F_\infty$. However, there exist characters $\chi\colon F_{2,3} \to \Z^2$ whose kernels are not even finitely generated.
\end{corollary}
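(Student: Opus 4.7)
The plan is to derive both statements directly from Theorem~\ref{thrm:main} via Citation~\ref{cit:bnsr_fin_props}. For the first claim, fix a discrete character $\chi\colon F_{2,3}\to\Z$. The nonzero characters of $F_{2,3}$ that vanish on $\ker(\chi)$ are precisely the real scalar multiples of $\chi$; after positive rescaling these represent just the two classes $[\chi]$ and $[-\chi]$. By Citation~\ref{cit:bnsr_fin_props}, $\ker(\chi)$ is of type $\F_\infty$ if and only if both $[\chi]$ and $[-\chi]$ lie in $\Sigma^\infty(F_{2,3})$. From Theorem~\ref{thrm:main} the complement of $\Sigma^\infty(F_{2,3})$ in $\Sigma(F_{2,3})$ is exactly
\[
\{[\lambda],\,[\rho]\}\cup\{[a\lambda+b\rho]\mid a,b>0\},
\]
so the first claim reduces to showing that no class in this set can equal $[\pm\chi]$ for a discrete $\chi$.

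Each of these ``bad'' classes is represented by a character of $F_{2,3}$ whose image in $\R$ is not cyclic; since a positive rescaling of a cyclic subgroup of $\R$ is again cyclic, non-cyclicity of the image rules out any discrete representative. For $\lambda$ and $\rho$ this is immediate: $\lambda(F_{2,3})=\Z\ln 2+\Z\ln 3$ has rank two thanks to the irrationality of $\ln 2/\ln 3$, and analogously for $\rho$. For a diagonal character $a\lambda+b\rho$ with $a,b>0$, I would invoke Lemma~\ref{lem:special_elements} to produce elements of $F_{2,3}$ on which $\rho$ vanishes while $(\chi_0^2,\chi_0^3)$ ranges over all of $\Z^2$; on such elements $a\lambda+b\rho$ takes values in $a\ln 2\cdot\Z+a\ln 3\cdot\Z$, which is again a rank-two subgroup of $\R$. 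This completes the first claim.

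The second claim needs only one explicit example. I would take $\chi=(\chi_0^2,\chi_0^3)\colon F_{2,3}\to\Z^2$, so that $\ker(\chi)=\ker(\chi_0^2)\cap\ker(\chi_0^3)=\ker(\lambda)$. Since $\lambda$ is a nonzero character vanishing on this kernel and $[\lambda]\notin\Sigma^1(F_{2,3})$ by Citation~\ref{cit:Sigma1}, Citation~\ref{cit:bnsr_fin_props} at level one gives that $\ker(\chi)$ is not finitely generated; this was in fact already observed in Corollary~\ref{cor:rationally_full_Sigma1}. The only real obstacle in the whole argument is the non-discreteness verification for the diagonal classes $[a\lambda+b\rho]$ with $a,b>0$, but with Lemma~\ref{lem:special_elements} in hand this reduces to the single known fact $\ln 2/\ln 3\notin\Q$.
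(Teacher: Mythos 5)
Your proposal is correct and follows essentially the same route as the paper's proof: apply Citation~\ref{cit:bnsr_fin_props} to the computation in Theorem~\ref{thrm:main}, observing that every class outside $\Sigma^\infty(F_{2,3})$ is of the form $[a\lambda+b\rho]$ with $a,b\ge 0$ and has image of rank at least $2$ in $\R$, hence cannot coincide with the class of (a multiple of) a discrete character, while the second claim is exactly the $\ker(\lambda)$ example from Corollary~\ref{cor:rationally_full_Sigma1}. The only cosmetic difference is that you phrase the exclusion as ``bad classes have no discrete representative'' whereas the paper phrases it as ``$\Z$ cannot surject onto a rank~$\ge 2$ image,'' which are the same argument.
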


\begin{proof}
The proof is similar to that of Corollary~\ref{cor:rationally_full_Sigma1}. For the second claim, just use $\chi=\lambda$. For the first claim, it suffices by Citation~\ref{cit:bnsr_fin_props} to show that for any character class $[\psi]$ not in $\Sigma^\infty(F_{2,3})$, the kernel of $\psi$ cannot contain $\ker(\chi)$ for $\chi$ discrete. Since $\Z$ cannot surject onto $\Z^2$, for this it suffices to see that the image of $\psi$ has rank at least $2$. By Theorem~\ref{thrm:main} we know $\psi=a\lambda+b\rho$ for some $a,b\ge 0$. Hence the image of $\psi$ either has rank $2$, if $a=0$ or $b=0$, or $4$, if $a,b>0$, so we are done.
\end{proof}

In particular, as discussed in the introduction, every discrete character class lies in $\Sigma^\infty(F_{2,3})$, but there exist (non-discrete) character classes that do not even lie in $\Sigma^1(F_{2,3})$. Additionally, Theorem~\ref{thrm:main} shows that $F_{2,3}$ provides an example of a type $\F_\infty$ group whose $\Sigma^\infty$ is determined by a polytope, but not an integral polytope. To the best of our knowledge this is the first group whose BNSR-invariants are known with any of these properties.

\subsection{Questions}\label{sec:questions}

Let us conclude by discussing some questions regarding generalizations of $F_{2,3}$. Given $S=\{n_1,\dots,n_s\}$ a subset of natural numbers with $n_i>1$ for all $i$, and any $r\in\N$, one can define the group $F_S^r$ of orientation-preserving piecewise linear homeomorphisms of $[0,r]$ with breakpoints in $\Z[\frac{1}{n_1\cdots n_s}]$ and slopes in $\langle S\rangle_{\Q^\times}$. This is the full generality in which Stein worked in \cite{stein92}, and the $F_S^r$ are sometimes called the \emph{Brown--Stein--Thompson groups}\footnote{According to \cite{bieri87}, Ken Brown considered the groups $F_S^1$ and could prove finite presentability. Brown did not publish this, presumably because it was superceded by his PhD student Melanie Stein's proof in \cite{stein92} that the $F_S^r$ are even of type $\F_\infty$. It seems appropriate then to call the $F_S^r$ the \emph{Brown--Stein--Thompson groups}.}. Even more generally, let $I\subseteq\R$ be an interval, $P$ a subgroup of the multiplicative group of positive real numbers, and $A$ a $\Z[P]$-submodule of the additive real numbers. Then we can define
\[
G(I;A,P) \defeq \{f\in \Homeo_+(I)\mid f \text{ is piecewise linear with slopes in } P \text{ and breaks in } A\}\text{.}
\]
For example,
\[
F_S^r = G\left([0,r];\langle S\rangle,\Z\left[\frac{1}{n_1\cdots n_s}\right]\right)\text{.}
\]
The fact that $A$ is invariant under multiplication by elements of $P$ ensures $G(I;A,P)$ is a group. These groups first appeared in \cite{bieri16} and are sometimes called the \emph{Bieri--Strebel groups} (the monograph \cite{bieri16} was not published until 2016, but was essentially completed in 1985).

Our arguments here for $\Sigma^m(F_{2,3})$ work equally well for groups of the form $F_{2,n}^r$, but as soon as $|S|>2$ and/or $2\not\in S$, things become significantly more difficult. For example it is not even clear what the abelianization of $F_{3,4,5}^1$ is. The abelianizations for $|S|\le 2$ are given in \cite{stein92}, and if $S$ satisfies that $n_1-1$ divides $n_i-1$ for all $i$ then Wladis's presentation of $F_S^1$ in \cite{wladis12} could in theory be abelianized, but already for $F_{3,4,5}^1$ none of the existing results apply. There is a topological procedure outlined in \cite{stein92} to abelianize the $F_S^r$, but to actually do it is a huge technical challenge. Given that abelianizing $F_S^r$ is already hard, it is clear that computing $\Sigma^m(F_S^r)$ in full generality is a very difficult task. As for the Bieri--Strebel groups $G(I;A,P)$, it is not even clear how often they are finitely presented, much less of type $\F_\infty$, so we are nowhere close to understanding their BNSR-invariants.

We do conjecture that the $\Sigma^m(F_S^r)$ behave similarly to $\Sigma^m(F_{2,3})$ and $\Sigma^m(F)$ in the following sense:

\begin{conjecture}
For any $S$ and $r$ we have $\Sigma^1(F_S^r)=\Sigma(F_S^r)\setminus\{[\lambda],[\rho]\}$, $\Sigma^2(F_S^r)=\Sigma(F_S^r)\setminus\{[a\lambda+b\rho]\mid a,b\ge 0\}$, and $\Sigma^\infty(F_S^r)=\Sigma^2(F_S^r)$.
\end{conjecture}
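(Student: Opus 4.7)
The plan is to imitate, step by step, the proof of Theorem~\ref{thrm:main}, hoping each ingredient has a suitable analogue for $F_S^r$. First I would pin down $\Hom(F_S^r,\R)$ well enough to make sense of $[\lambda]$ and $[\rho]$ as distinct classes and to split it canonically into a sum of left-based and right-based subspaces. The second step is to establish $\Sigma^1(F_S^r)=\Sigma(F_S^r)\setminus\{[\lambda],[\rho]\}$: this should follow essentially verbatim from \cite[Theorem~8.1]{bieri87}, since $F_S^r$ is irreducible in the Bieri--Neumann--Strebel sense and $\lambda,\rho$ remain independent. For the outer exclusion $[a\lambda+b\rho]\notin\Sigma^2(F_S^r)$ when $a,b>0$, I would invoke \cite[Theorem~A1]{kochloukova02} exactly as in the $F_{2,3}$ case; its sole hypothesis is the absence of non-abelian free subgroups, which holds for all $F_S^r$ by Brin--Squier-type results for PL homeomorphism groups of intervals.

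The heart of the conjecture, as for $F_{2,3}$, is that every remaining character lies in $\Sigma^\infty$. Following the template of Lemmas~\ref{lem:F23_hnn} and~\ref{lem:conjugator}, I would first establish that for a suitable $0<a<r$ with $a\in\Z[\frac{1}{n_1\cdots n_s}]$, the subgroup $F_S^r[a,r]$ is conjugate (in the larger Bieri--Strebel group on a slightly bigger interval) to a group isomorphic to $F_S^r$. Then, for any discrete left-based $\chi$, I would construct a stable letter $t\in\ker(\chi)$ with $\lambda(t)<0$ and $(0,a)\subseteq\Supp(t)$, and verify the ascending HNN-decomposition $\ker(\chi)=F_S^r[a,r]*_t$. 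This yields the analogue of Lemma~\ref{lem:left_discrete}, after which Proposition~\ref{prop:left_any} and Corollary~\ref{cor:all_in_Sigmainfty} go through formally by the same arguments using Citations~\ref{cit:hnn_dead_base}, \ref{cit:hnn_live_base}, and~\ref{cit:ker_to_Zn}.

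The main obstacle is the abelianization of $F_S^r$, which, as the authors note, is not even known for $S=\{3,4,5\}$. Without an explicit basis of $\Hom(F_S^r,\R)$ one cannot even identify which characters are ``discrete'' or ``one-sided'', let alone pick a $t$ realizing prescribed character values. A closely related obstacle is producing the element-building analogue of Lemma~\ref{lem:special_elements}: for a given character one needs elements of $F_S^r$ taking prescribed integer values on that character while vanishing on a chosen complement, and the usual trick of composing slope-2 and slope-3 bumps from $F\le F_{2,3}$ does not immediately generalize when $|S|>2$ or $2\notin S$, since the generating multiplicative group may not decompose so cleanly via subgroups already known to be $\F_\infty$. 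If one first resolved the abelianization (for instance by carrying out the topological procedure sketched in \cite{stein92} in full generality, or by extending Wladis's presentation beyond the divisibility case), then the present ascending-HNN machinery should deliver the conjecture with only cosmetic modifications; conversely, any honest attempt at the conjecture seems to force one to first settle the abelianization question.
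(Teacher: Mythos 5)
The statement you are addressing is a \emph{conjecture}: the paper offers no proof of it, and indeed Section~\ref{sec:questions} explains why the authors expect it to be hard. Your submission is accordingly a programme rather than a proof, and you say so yourself. The parts of your plan that do work are essentially the parts that are already within reach: the identification $\Sigma^1(F_S^r)=\Sigma(F_S^r)\setminus\{[\lambda],[\rho]\}$ via \cite[Theorem~8.1]{bieri87} (irreducibility and independence of $\lambda,\rho$ hold for all $F_S^r$), and the exclusion $[a\lambda+b\rho]\notin\Sigma^2(F_S^r)$ for $a,b> 0$ via \cite[Theorem~A1]{kochloukova02} together with Brin--Squier. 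The genuinely open content is the inclusion statement, that every other character class lies in $\Sigma^\infty(F_S^r)$, and that is exactly where your outline stalls, for the reason you name: without the abelianization of $F_S^r$ (unknown already for $F_{3,4,5}^1$) and without an analogue of Lemma~\ref{lem:special_elements} producing elements with prescribed character values and controlled support, one cannot even begin the construction of the stable letter in the analogue of Lemma~\ref{lem:F23_hnn}. Acknowledging an obstruction is not the same as overcoming it, so this does not constitute a proof.

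There is also a structural gap in your plan beyond the abelianization issue. You propose to ``split $\Hom(F_S^r,\R)$ canonically into a sum of left-based and right-based subspaces,'' but such a spanning decomposition is special to the $F_{2,3}$ (and $F_{2,n}^r$) situation, where the abelianization has rank exactly matching the endpoint-slope characters. Already for $F_{\{n\}}=F_n$ with $n\ge 3$ the abelianization is $\Z^n$, so there are characters that vanish to all orders at the endpoints and are not of the form $\chi_L+\chi_R$; the same is expected for general $F_S^r$. Your reduction via Corollary~\ref{cor:all_in_Sigmainfty} therefore cannot reach all remaining characters even in principle, and in the known cases \cite{kochloukova12,zaremsky17F_n} those ``extra'' characters required genuinely different techniques (Morse-theoretic arguments on Stein-type complexes) rather than the ascending HNN machinery of Citations~\ref{cit:hnn_dead_base}--\ref{cit:ker_to_Zn}. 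So even granting the abelianization, the claim that the rest goes through ``with only cosmetic modifications'' is too optimistic: a new argument is needed both for the non-endpoint characters and for building one-bump elements realizing a single slope generator when $2\notin S$ or $|S|>2$, where the breakpoint module constraints are precisely what make the problem hard.
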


Let us make one final remark: it is notable how often the behavior $\Sigma^2=\Sigma^\infty$ holds for ``globally defined'' groups related to Thompson's group $F$, such as the $F_n$ \cite{kochloukova12,zaremsky17F_n}, braided $F$ \cite{zaremsky18}, the Lodha--Moore group \cite{lodha20BNSR}, and now the Stein group $F_{2,3}$ (and conjecturally all the Brown--Stein--Thompson groups $F_S^r$). It would be nice to find some deeper understanding of this phenomenon. In particular it would be interesting to know whether as soon as a Bieri--Strebel group $G(I;A,P)$ is of type $\F_\infty$ it satisfies $\Sigma^2(G(I;A,P))=\Sigma^\infty(G(I;A,P))$. Note that there do exist groups of piecewise linear homeomorphisms that have different $\Sigma^2$ and $\Sigma^\infty$ (or even different $\Sigma^m$ and $\Sigma^\infty$ for arbitrarily large $m$), for instance direct products of copies of $F$ \cite[Theorem~3.2]{bieri10}, but these are somewhat ad hoc, and do not have a nice ``global'' definition like the Bieri--Stebel groups. In an opposite direction, to the best of our knowledge, no example is known of a type $\F_\infty$ group $G$ for which the inclusion $\Sigma^m(G)\subseteq \Sigma^{m-1}(G)$ is proper for every $m\in\N$, so it would be interesting to hunt for such an example in the world of groups of piecewise linear homeomorphisms of $[0,1]$.

\bibliographystyle{alpha}

\begin{thebibliography}{GMSW01}

\bibitem[BG84]{brown84}
Kenneth~S. Brown and Ross Geoghegan.
\newblock An infinite-dimensional torsion-free $\mathrm{FP}_\infty$ group.
\newblock {\em Inventiones mathematicae}, 77(2):367--381, 1984.

\bibitem[BGK10]{bieri10}
Robert Bieri, Ross Geoghegan, and Dessislava~H. Kochloukova.
\newblock The sigma invariants of {T}hompson's group {$F$}.
\newblock {\em Groups Geom. Dyn.}, 4(2):263--273, 2010.

\bibitem[BNS87]{bieri87}
Robert Bieri, Walter~D. Neumann, and Ralph Strebel.
\newblock A geometric invariant of discrete groups.
\newblock {\em Invent. Math.}, 90(3):451--477, 1987.

\bibitem[BR88]{bieri88}
Robert Bieri and Burkhardt Renz.
\newblock Valuations on free resolutions and higher geometric invariants of
  groups.
\newblock {\em Comment. Math. Helv.}, 63(3):464--497, 1988.

\bibitem[Bro87]{brown87}
Kenneth~S. Brown.
\newblock Finiteness properties of groups.
\newblock In {\em Proceedings of the {N}orthwestern conference on cohomology of
  groups ({E}vanston, {I}ll., 1985)}, volume~44, pages 45--75, 1987.

\bibitem[BS85]{brin85}
Matthew~G. Brin and Craig~C. Squier.
\newblock Groups of piecewise linear homeomorphisms of the real line.
\newblock {\em Invent. Math.}, 79(3):485--498, 1985.

\bibitem[BS16]{bieri16}
Robert Bieri and Ralph Strebel.
\newblock {\em On groups of {PL}-homeomorphisms of the real line}, volume 215
  of {\em Mathematical Surveys and Monographs}.
\newblock American Mathematical Society, Providence, RI, 2016.

\bibitem[CFP96]{cannon96}
J.~W. Cannon, W.~J. Floyd, and W.~R. Parry.
\newblock Introductory notes on {R}ichard {T}hompson's groups.
\newblock {\em Enseign. Math. (2)}, 42(3-4):215--256, 1996.

\bibitem[GMSW01]{geoghegan01}
Ross Geoghegan, Michael~L. Mihalik, Mark Sapir, and Daniel~T. Wise.
\newblock Ascending {HNN} extensions of finitely generated free groups are
  {H}opfian.
\newblock {\em Bull. London Math. Soc.}, 33(3):292--298, 2001.

\bibitem[Kie20]{kielak20}
Dawid Kielak.
\newblock The {B}ieri-{N}eumann-{S}trebel invariants via {N}ewton polytopes.
\newblock {\em Invent. Math.}, 219(3):1009--1068, 2020.

\bibitem[Koc02]{kochloukova02}
Dessislava~H. Kochloukova.
\newblock Subgroups of constructible nilpotent-by-abelian groups and a
  generalization of a result of {B}ieri, {N}eumann and {S}trebel.
\newblock {\em J. Group Theory}, 5(2):219--231, 2002.

\bibitem[Koc12]{kochloukova12}
D.~H. Kochloukova.
\newblock On the {$\Sigma^2$}-invariants of the generalised {R}. {T}hompson
  groups of type {$F$}.
\newblock {\em J. Algebra}, 371:430--456, 2012.

\bibitem[Lod20]{lodha20}
Yash Lodha.
\newblock Coherent actions by homeomorphisms on the real line or an interval.
\newblock {\em Israel J. Math.}, 235(1):183--212, 2020.

\bibitem[LZ]{lodha20BNSR}
Yash Lodha and Matthew C.~B. Zaremsky.
\newblock The {BNSR}-invariants of the {L}odha-{M}oore groups, and an exotic
  simple group of type {$\text{F}_\infty$}.
\newblock Submitted. arXiv:2007.12518.

\bibitem[Mei96]{meinert96}
Holger Meinert.
\newblock The homological invariants for metabelian groups of finite
  {P}r\"{u}fer rank: a proof of the {$\Sigma^m$}-conjecture.
\newblock {\em Proc. London Math. Soc. (3)}, 72(2):385--424, 1996.

\bibitem[Ste92]{stein92}
Melanie Stein.
\newblock Groups of piecewise linear homeomorphisms.
\newblock {\em Trans. Amer. Math. Soc.}, 332(2):477--514, 1992.

\bibitem[Wla11]{wladis11}
Claire Wladis.
\newblock Thompson's group is distorted in the {T}hompson-{S}tein groups.
\newblock {\em Pacific J. Math.}, 250(2):473--485, 2011.

\bibitem[Wla12]{wladis12}
Claire Wladis.
\newblock The word problem and the metric for the {T}hompson-{S}tein groups.
\newblock {\em J. Lond. Math. Soc. (2)}, 85(2):301--322, 2012.

\bibitem[Zar]{zaremsky18}
Matthew C.~B. Zaremsky.
\newblock Geometric structures related to the braided {T}hompson groups.
\newblock Submitted. arXiv:1803.02717.

\bibitem[Zar17]{zaremsky17F_n}
Matthew Curtis~Burkholder Zaremsky.
\newblock On the {$\Sigma$}-invariants of generalized {T}hompson groups and
  {H}oughton groups.
\newblock {\em Int. Math. Res. Not. IMRN}, (19):5861--5896, 2017.

\end{thebibliography}

\end{document}